\newtheorem{theorem}{Theorem}[section]
\newtheorem{lemma}[theorem]{Lemma}
\newtheorem{definition}[theorem]{Definition}
\newtheorem{assumption}[theorem]{Assumption}
\newtheorem{remark}{Remark}[section]
\newtheorem{example}[theorem]{Example}
\numberwithin{equation}{section}
\newenvironment{proof}[1][Proof]{\noindent\textit{#1.} }{\hfill \rule{0.5em}{0.5em}}
\begin{document}
\title{Stochastic averaging principle and stability for multi-valued McKean-Vlasov stochastic differential equations with jumps}
\date{\today}

\author
{Guangjun Shen, Jie Xiang and Jiang-Lun Wu
\thanks{Corresponding author. This research is supported by the  National Natural Science Foundation of China (12071003)} }

\maketitle

\begin{abstract}
In this paper, we consider the stochastic averaging principle and stability for multi-valued McKean-Vlasov stochastic differential equations with jumps. First, under certain averaging conditions, we are able to show that the solutions of the equations concerned can be approximated by solutions of the associated averaged multi-valued McKean-Vlasov stochastic differential equations with jumps in the sense of the mean square convergence. Second, we extend the classical It\^{o}'s formula from stochastic differential equations to multi-valued McKean-Vlasov stochastic differential equations with jumps. Last, as  application of It\^{o}'s formula, we present the exponential stability of second moments, the exponentially 2-ultimate boundedness and the almost surely asymptotic stability for their solutions in terms of a Lyapunov function.
\vskip.2cm \noindent {\bf Keywords:} McKean-Vlasov stochastic differential equations; stochastic averaging principle;  It\^{o}'s formula;  stability; Lions derivative.
\vskip.2cm \noindent {\bf 2020 Mathematics Subject Classification:} {60H10; 34K33; 35Q83}
\end{abstract}

\section{Introduction}\label{sec1}
As is well known, multi-valued stochastic differential equations are widely applied to
model stochastic systems in different branches of science and industry. Averaging principle, stability, boundedness and applications of the solution are the most popular research topic in the field of stochastic dynamic systems and control.
In this paper, we consider  the following multi-valued McKean-Vlasov stochastic differential equations (MVSDEs) with jumps:
\begin{equation}\label{sec1-eq1.1}
\begin{split}
dX_t\in&-\mathcal{A}(X_t)dt+b(t,X_t,\mathscr{L}_{X_t})dt+\sigma(t,X_t,\mathscr{L}_{X_t})dB_t\\
&\quad+\int_{U_0}f(t,X_{t-},\mathscr{L}_{X_t},u)\widetilde{N}(dt,du),\quad t\in[0,T],
\end{split}
\end{equation}
where $\mathscr{L}_{X_t}$ denotes the probability distribution of $X_t,$
$\mathcal{A}:\mathbb{R}^d\rightarrow2^{\mathbb{R}^d}$ is a multi-valued maximal monotone operator and $\hbox{Int}(D(\mathcal{A}))\neq \emptyset$ ($\hbox{Int}(D(\mathcal{A}))$ denotes the interior of the  domain $D(\mathcal{A})$), $B$ is an $m$-dimensional Brownian motion, $N$ is the counting measure
of a stationary Poisson point process with characteristic measure $v$ on some measurable
space $(\mathbb{U},\mathscr{U}),$ $\widetilde{N}(dt,du)$ stands for the compensated martingale measure of $N(dt,du)$ with $v(\mathbb{U}\verb|\|\mathbb{U}_0)<\infty, \mathbb{U}_0\in\mathscr{U}.$
The coefficients
$b:[0,T]\times\mathbb{R}^d\times\mathcal{M}_2(\mathbb{R}^d)\rightarrow\mathbb{R}^d$, $\sigma:[0,T]\times\mathbb{R}^d\times\mathcal{M}_2(\mathbb{R}^d)\rightarrow\mathbb{R}^{d\times m}$ and
$f:[0,T]\times\mathbb{R}^d\times\mathcal{M}_2(\mathbb{R}^d)\times\mathbb{U}_0\rightarrow\mathbb{R}^d$
are all Borel measurable (see Section \ref{sec2}). It is well
known (see C\'{e}pa \cite{Cepa1}) that these equations include stochastic variational inequalities as a special case where the
maximal monotone operator $\mathcal{A}$ is the subdifferential operator of some convex function.
It is worthwhile mentioning here that stochastic variational inequalities generalize reflected stochastic differential equations in a convex domain.

When $\mathcal{A}=0$, if we ignore random measure $\widetilde{N},$  Equation \eqref{sec1-eq1.1} becomes  McKean-Vlasov stochastic differential equations (MVSDEs). Generally, nonlinear Fokker-Planck equations can be characterised by MVSDEs, which are also named as distribution dependent SDEs or mean field SDEs. A distinct feature of such
systems is the appearance of probability laws in the coefficients of the resulting equations. There has been an increasing interest to study existence and uniqueness for solutions of MVSDEs. Wang \cite{Wang} established strong well-posedness of MVSDEs with one-sided Lipschitz continuous drifts and Lipschitz-continuous dispersion coefficients. Under integrability conditions on distribution dependent coefficients, Huang and Wang \cite{Huang1} obtained the existence and uniqueness for MVSDEs with non-degenerate noise.  Mehri and Stannat \cite{Mehri} proposed a Lyapunov-type approach to the problem of existence and uniqueness of general law-dependent SDEs. Many interesting studies of MVSDEs have been developed further in Bao et al. \cite{Bao}, Huang et al. \cite{Huang2}, Ren and Wang \cite{Ren}, R\"{o}ckner and Zhang \cite{Rockner}, Mishura and Veretennikov \cite{Mishura}, Chaudru de Raynal \cite{Chaudru}, Hammersley et al \cite{Hammersley}, Shen et al.  \cite{G.-J. Shen} and  the references therein. However, when the noise is the random measure $\widetilde{N},$ the results about the existence and uniqueness for solutions of MVSDEs are  rarely. Let us mention some related works.  Qiao and Wu \cite{H. Qiao} proved that the path independent property of additive functionals of MVSDEs with jumps can be  characterized by nonlinear partial integro-differential
equations involving $L$-derivatives with respect to probability measures introduced
by Lions. Liu et al. \cite{W. Liu} established large and moderate deviation principles for MVSDEs with jumps by applying the weak convergence method.

When $\mathcal{A}\neq0$, in the distribution independent case, there have been many fundamental studies on  Multi-valued stochastic differential equations  driven Brownian motion. Wu \cite{Wu} obtained the  existence conditions of weak solutions to multi-valued
stochastic differential equations with discontinuous coefficients.
Ren, Xu and Zhang \cite{Ren1} studied large deviation principle of Freidlin-Wentzell type for multi-valued stochastic differential equations with monotone drifts that in particular contain
a class of SDEs with reflection in a convex domain. Ren, Wu and Zhang \cite{Ren2} proved a large deviation principle of Freidlin-Wentzell type for multi-valued stochastic differential equations that is a little more general than the results obatined by Ren et al.\cite{Ren1}. As an application, they derived a functional iterated logarithm law for the solutions. Zhang \cite{H.Zhang} proved moderate deviation principle for multi-valued stochastic differential equations using weak convergence approach.
Stochastic averaging principle for Multi-valued SDEs driven by Brownian motion was obtained by  Ngoran and  Modeste \cite{Goran}, Xu and Liu \cite{Xu}.
On the other hand,  there has been an increasing interest to study the  Multi-valued stochastic differential equations  driven jump process. Ren and Wu \cite{Ren3} proved the existence and uniqueness of solutions of multi-valued stochastic differential equations driven by Poisson point processes.  Guan and Wu \cite{Guan} studied the exponential ergodicity of diffusions generated by a multi-valued stochastic differential equation with L\'{e}vy jumps. Guo and Pei \cite{Guo} obtained stochastic averaging principles for multi-valued stochastic differential equations driven by Poisson point processes. Mao et al. \cite{W. Mao} studied the averaging principle for multivalued
SDEs with jumps and non-Lipschitz coefficients. More recently, Gong and Qiao \cite{J. Gong}  concerned the stability for  multi-valued MVSDEs driven by Brownian motion with non-Lipschitz coefficients. Fang et al. \cite{K. Fang} presented sufficient conditions and criteria to establish the large and
moderate deviation principle for multi-valued MVSDEs by
means of the weak convergence method.  The authors of this paper obtained stochastic averaging principle for multi-valued MVSDEs driven by Brownian motion \cite{G.-J. Shen2}.
However, to the best of our knowledge, there are no literature study  multi-valued MVSDEs  with jumps \eqref{sec1-eq1.1}.
In order to fill the gap, the objective of this paper  are two folds. Firstly,  under certain averaging condition, we  show that the solutions of the equations concerned can be approximated by solutions of the associated averaged multi-valued MVSDEs with jumps in the sense of the mean square convergence. Secondly, we extend the classical It\^{o}'s formula from stochastic differential equations to multi-valued MVSDEs with jumps. As  application of It\^{o}'s formula, we present the exponential stability of second moments, the exponentially 2-ultimate boundedness and the almost surely asymptotic stability for their solutions in terms of a Lyapunov function.

The rest of the paper is organized as follows. In Section 2, we recall some basic notation and introduce maximal monotone operators and derivatives for functions on $\mathcal{M}_2(\mathbb{R}^d)$. In Section 3, we prove stochastic averaging principle for the equation concerned. In Section 4, we obtain the It\^{o}'s formula for multi-valued MVSDEs with jumps. Then in Section 5 we present the exponential stability of second moments, the exponentially 2-ultimate boundedness and the almost surely asymptotic stability for the strong solutions of Equation \eqref{sec1-eq1.1}.

\section{Preliminaries}\label{sec2}

\subsection{Notations}\label{sec2.1}
We use $|\cdot|$ and $\|\cdot\|$ for norms of vectors and matrices, respectively. Let $\langle\cdot,\cdot\rangle$ denote the scalar product in $\mathbb{R}^d$ and let $B^*$ denote the transpose of the matrix $B$. Let $C(\mathbb{R}^d)$ be the collection of continuous functions on $\mathbb{R}^d$ and $C^2(\mathbb{R}^d)$ be the space of continuous functions on $\mathbb{R}^d$
which have continuous partial derivatives of order up to 2. Define the Banach space
$$
C_\rho(\mathbb{R}^d):=\Big\{\varphi\in C(\mathbb{R}^d),
\|\varphi\|_{C_\rho(\mathbb{R}^d)}:=\sup_{x\in\mathbb{R}^d}\frac{|\varphi(x)|}{(1+|x|)^2}
+\sup_{x\neq y}\frac{|\varphi(x)-\varphi(y)|}{|x-y|}<\infty\Big\}.
$$
Let $\mathscr{B}(\mathbb{R}^d)$ be the Borel $\sigma$-algebra on $\mathbb{R}^d$ and $\mathscr{P}(\mathbb{R}^d)$ be the space of all probability measures defined on $\mathscr{B}(\mathbb{R}^d)$ carrying the usual topology of weak convergence. Let $\mathcal{M}_2(\mathbb{R}^d)$ be the set of probability measures on
$\mathscr{B}(\mathbb{R}^d)$ with finite second order moments. That is,
$$
\mathcal{M}_2(\mathbb{R}^d):=\Big\{\mu\in\mathscr{P}(\mathbb{R}^d):\|\mu\|_2^2:=\int_{\mathbb{R}^d}|x|^2\mu(dx)<\infty\Big\}.
$$
Define the following metric on $\mathcal{M}_2(\mathbb{R}^d)$:
$$
\rho(\mu,\nu):=\sup_{\|\varphi\|_{C_\rho(\mathbb{R}^d)}\leq1}
\Big|\int_{\mathbb{R}^d}\varphi(x)\mu(dx)-\int_{\mathbb{R}^d}\varphi(x)\nu(dx)\Big|,\quad \mu,\nu\in\mathcal{M}_2(\mathbb{R}^d).
$$
It is clear that $(\mathcal{M}_2(\mathbb{R}^d),\rho)$ is a complete metric space and $\rho(\mathscr{L}_X,\mathscr{L}_Y)\leq(\mathbb{E}|X-Y|^2)^{\frac12}$, and further that the convergence with respect to the metric $\rho$ is equivalent to the weak convergence
(see e.g. Gong and Qiao \cite{J. Gong}).

\subsection{Multi-valued MVSDEs with jumps}\label{sec2.2}
In this subsection, we introduce multi-valued MVSDEs with jumps.
Let $(\Omega,\mathscr{F},\mathbb{P})$ be a complete probability space, $(B_t)_{t\geq0}$ be a $m$-dimensional Brownian motion, $\mathcal{D}([0,T];\mathbb{R}^d)$ denote the family of all c\`{a}dl\`{a}g functions, $(\mathbb{U},\|\cdot\|_{\mathbb{U}})$ be a finite dimensional normed space with its Borel $\sigma$-algebra $\mathscr{U}$, $v$ be a $\sigma$-finite measure defined on $(\mathbb{U},\mathscr{U})$. We fix $\mathbb{U}_0=\{\|u\|_{\mathbb{U}}\leq\alpha\}$, where $\alpha>0$ is a constant, with $v(\mathbb{U}\verb|\|\mathbb{U}_0)<\infty$ and $\int_{\mathbb{U}_0}\|u\|_{\mathbb{U}}^2v(du)<\infty$. Let $p=p(t)$ be a stationary Poisson point process with characteristic $v$,
$D_{p(t)}$ be a countable subset of $\mathbb{R}_+$. Denote by $N(dt,du)$ the Poisson counting measure associated with $p(t)$, i.e.,
$$
N(t,A):=\sum_{s\in D_{p(s)},s\leq t}I_A(p(t)),
$$
with intensity $\mathbb{E}N(dt,du)=v(du)dt$. Denote
$
\widetilde{N}(dt,du):=N(dt,du)-v(du)dt,
$
that is, $\widetilde{N}(dt,du)$ stands for the compensated martingale measure of $N(dt,du)$. Moreover, $B_.$ and $N(dt,du)$ are mutually independent. Fix $T>0$ and let $\{\mathscr{F}_t\}_{t\in[0,T]}$ be the filtration generated by $(B_t)_{t\geq0}$ and $N(dt,du)$, and augmented by a $\sigma$-field $\mathscr{F}^0$, i.e.
\begin{align*}
 \mathscr{F}^0_t:=\sigma\{B_s,N((0,s],A):0\leq s\leq t,A\in\mathscr{U}\}, ~~~\mathscr{F}_t:=(\cap_{s>t}\mathscr{F}^0_s)\vee\mathscr{F}^0,\quad t\in[0,T],
\end{align*}
where $\mathscr{F}^0\subset\mathscr{F}$ has the following properties:
\begin{itemize}
\item [(i)] $(B_t)_{t\geq0}$ and $N(dt,du)$ are independent of $\mathscr{F}^0$;
\item [(ii)] $\mathcal{M}_2(\mathbb{R}^d)=\{\mathbb{P}\circ\xi^{-1},\xi\in
    L^2(\mathscr{F}^0;\mathbb{R}^d)\}$;
\item [(iii)] $\mathscr{F}^0\supset\mathcal{N}$ and $\mathcal{N}$ is the collection of all $\mathbb{P}$-null sets.
\end{itemize}

\begin{lemma}\label{sec2-lem2.1}(\cite{W. Mao})
Let $\phi:\mathbb{R}_+\times\mathbb{U}_0\rightarrow\mathbb{R}^d$ and assume that $\mathbb{E}\Big(\int_0^t\int_{\mathbb{U}_0}|\phi(s,u)|^2v(du)ds\Big)<\infty$, then
$$
\mathbb{E}\int_0^t\int_{\mathbb{U}_0}|\phi(s,u)|^2N(ds,du)=\mathbb{E}\int_0^t\int_{\mathbb{U}_0}|\phi(s,u)|^2v(du)ds.
$$
Moreover, there exists a positive constant $C$ such that
$$
\mathbb{E}\Big(\sup_{0\leq t\leq T}\Big|\int_0^t\int_{\mathbb{U}_0}\phi(s,u)\widetilde{N}(ds,du)\Big|\Big)
\leq C\mathbb{E}\Big(\int_0^T\int_{\mathbb{U}_0}|\phi(s,u)|^2N(ds,du)\Big)^{\frac12}.
$$
\end{lemma}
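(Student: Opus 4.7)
The plan is to handle the two assertions of the lemma separately by the standard technique for stochastic integrals against Poisson random measures: first verify them on a dense class of simple predictable integrands, then extend to the general case by approximation. Throughout I interpret $\phi$ as $\mathscr{F}_t$-predictable, which is implicit for the compensated integral on the right-hand side to make sense.

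For the identity $\mathbb{E}\int_0^t\int_{\mathbb{U}_0}|\phi|^2 N(ds,du)=\mathbb{E}\int_0^t\int_{\mathbb{U}_0}|\phi|^2 v(du)ds$, I would first take $\phi$ of simple form $\phi(s,u,\omega)=\sum_{i,j}\xi_{ij}(\omega)\mathbf{1}_{(t_i,t_{i+1}]}(s)\mathbf{1}_{A_j}(u)$, with $\xi_{ij}$ bounded and $\mathscr{F}_{t_i}$-measurable, $(A_j)$ disjoint in $\mathbb{U}_0$, and $v(A_j)<\infty$. Using that $N((t_i,t_{i+1}]\times A_j)$ is Poisson with mean $(t_{i+1}-t_i)v(A_j)$ and independent of $\mathscr{F}_{t_i}$, the identity follows by conditioning on $\mathscr{F}_{t_i}$ and a direct computation. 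For a general non-negative integrand satisfying the integrability hypothesis, I would approximate $|\phi|^2$ from below by simple non-negative predictable integrands and invoke monotone convergence on both sides; this is the classical compensation formula for the Poisson random measure $N$.

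For the maximal inequality, the key observation is that $M_t:=\int_0^t\int_{\mathbb{U}_0}\phi(s,u)\widetilde{N}(ds,du)$ is a purely discontinuous, locally square-integrable martingale whose quadratic variation is $[M,M]_t=\int_0^t\int_{\mathbb{U}_0}|\phi(s,u)|^2 N(ds,du)$, since the jumps of $M$ are exactly $\phi(s,u)$ at jump times of the Poisson process. The classical Burkholder-Davis-Gundy inequality with exponent $p=1$ then yields $\mathbb{E}\sup_{0\leq t\leq T}|M_t|\leq C\,\mathbb{E}[M,M]_T^{1/2}$, which is precisely the stated bound. I would establish this first for integrands making $M$ a genuine $L^2$-martingale, where BDG applies directly, and then remove the restriction by a standard localization together with a Fatou argument on both sides.

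The main technical point is not deep but lies in the careful approximation of a general predictable $\phi$ by simple integrands in such a way that both sides of the two inequalities converge. The first identity of the lemma, which is in effect an $L^2$-isometry for the compensated integral after polarization, is exactly what provides the control needed to pass to the limit in the BDG-type estimate; so the second assertion is really a consequence of the first together with the abstract BDG inequality for cadlag martingales.
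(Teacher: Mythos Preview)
The paper does not supply its own proof of this lemma; it is simply quoted from the reference \cite{W. Mao}. Your outline is the standard argument and is correct: the first identity is the compensation formula for the Poisson random measure, proved via simple predictable integrands and monotone convergence, and the second is the Burkholder--Davis--Gundy inequality at exponent $p=1$ applied to the purely discontinuous martingale $M_t=\int_0^t\int_{\mathbb{U}_0}\phi\,\widetilde{N}(ds,du)$, whose quadratic variation is exactly $[M,M]_t=\int_0^t\int_{\mathbb{U}_0}|\phi|^2\,N(ds,du)$.
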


\subsection{Maximal monotone operators}\label{sec2.3}
In the subsection, we introduce the basic definitions and properties of the multi-valued maximal monotone operators. For more details about the maximal monotone operator, we refer to (\cite{Cepa},\cite{X.Zhang},\cite{J. Gong}).

Denote by $2^{\mathbb{R}^d}$ the set of all subsets of $\mathbb{R}^d$, a map $\mathcal{A}:\mathbb{R}^d\rightarrow2^{\mathbb{R}^d}$ is called a multi-valued operator on $\mathbb{R}^d$. Given such a multi-valued operator $\mathcal{A}$, define:
$$
D(\mathcal{A})=\{x\in{\mathbb{R}^d}:\mathcal{A}(x)\neq\emptyset\},~~
Gr(\mathcal{A})=\{(x,y)\in{\mathbb{R}^d}\times{\mathbb{R}^d}:x\in{\mathbb{R}^d},y\in\mathcal{A}(x)\}.
$$
$\mathcal{A}^{-1}$ is defined by $
y\in\mathcal{A}^{-1}(x)\Leftrightarrow x\in\mathcal{A}(y).
$

\begin{definition}\label{sec2-def2.2}
\begin{itemize}
\item [(i)] A multi-valued operator $\mathcal{A}$ is called monotone if
$$
\langle x_1-x_2,y_1-y_2\rangle\geq0,\quad \forall(x_1,y_1),(x_2,y_2)\in Gr(\mathcal{A}).
$$
\item [(ii)] A monotone operator $\mathcal{A}$ is called maximal monotone if and only if
$$
(x_1,y_1)\in Gr(\mathcal{A})\Leftrightarrow\langle x_1-x_2,y_1-y_2\rangle\geq0,\quad\forall(x_2,y_2)\in Gr(\mathcal{A}).
$$
\end{itemize}
\end{definition}

Given $T>0$. Let $\mathscr{V}_0$ be the set of all continuous functions $K:[0,T]\rightarrow\mathbb{R}^d$ with finite variations and $K_0=0$. For $K\in\mathscr{V}_0$ and $s\in[0,T]$, we shall use $|K|^s_0$ to denote the variation of $K$ on $[0,s]$. Set
\begin{align*}
\mathscr{A}:=\Big\{&(X,K):X\in \mathcal{D}([0,T],\overline{D(\mathcal{A})}),K\in\mathscr{V}_0,
~~\hbox{and}~~\langle X_t-\alpha_t,dK_t-\beta_tdt\rangle\geq0~~\hbox{a.s.}\\
&~~~~~~~~~~~~~~\hbox{for any}~~
\alpha, \beta\in\mathcal{D}([0,T],\overline{D(\mathcal{A})})~~\hbox{satisfying} ~~(\alpha_t,\beta_t)\in Gr(\mathcal{A})
\Big\}.
\end{align*}

\begin{lemma}\label{sec2-lem2.3}(\cite{Cepa1})
Suppose $(X,K)$ and $(\tilde{X},\tilde{K})$ are two pairs in $\mathscr{A}$, then
$$
\langle X_t-\tilde{X}_t,dK_t-d\tilde{K}_t\rangle\geq0.
$$
\end{lemma}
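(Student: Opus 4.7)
The statement is the classical monotonicity property of the pair class $\mathscr{A}$ attached to a maximal monotone operator (cf.~C\'epa \cite{Cepa1}), and I would prove it by Yosida regularization of $\mathcal{A}$.

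Introduce, for each $\lambda>0$, the resolvent $J_\lambda:=(I+\lambda\mathcal{A})^{-1}$ and the Yosida approximant $\mathcal{A}_\lambda:=\lambda^{-1}(I-J_\lambda)$. By maximal monotonicity $J_\lambda$ and $\mathcal{A}_\lambda$ are single-valued and Lipschitz on $\mathbb{R}^d$, one has $(J_\lambda y,\mathcal{A}_\lambda y)\in Gr(\mathcal{A})$ for every $y\in\mathbb{R}^d$, and as $\lambda\to 0$ the pointwise limits $J_\lambda y\to y$ and $\lambda\mathcal{A}_\lambda y\to 0$ hold for every $y\in\overline{D(\mathcal{A})}$.

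I would then insert the c\`adl\`ag, graph-valued processes $(J_\lambda\tilde X_\cdot,\mathcal{A}_\lambda\tilde X_\cdot)$ and $(J_\lambda X_\cdot,\mathcal{A}_\lambda X_\cdot)$ as admissible test pairs in the defining inequalities of $\mathscr{A}$ for $(X,K)$ and $(\tilde X,\tilde K)$ respectively, obtaining, for all $0\leq s\leq t\leq T$,
\begin{equation*}
\int_s^t\langle X_u-J_\lambda\tilde X_u,\,dK_u-\mathcal{A}_\lambda\tilde X_u\,du\rangle\geq 0,
\end{equation*}
\begin{equation*}
\int_s^t\langle\tilde X_u-J_\lambda X_u,\,d\tilde K_u-\mathcal{A}_\lambda X_u\,du\rangle\geq 0.
\end{equation*}
Adding these and rewriting the $du$-integrand by means of the identity $y=J_\lambda y+\lambda\mathcal{A}_\lambda y$, together with the monotonicity inequality $\langle J_\lambda X_u-J_\lambda\tilde X_u,\mathcal{A}_\lambda X_u-\mathcal{A}_\lambda\tilde X_u\rangle\geq 0$ available because both graph pairs lie in $Gr(\mathcal{A})$, reduces the right-hand side to terms of order $\lambda$ paired either against $dK,d\tilde K$ or against each other.

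Finally, I let $\lambda\to 0$. On the Stieltjes side, the bound $|J_\lambda z|\leq|z|+|J_\lambda 0|$ enables dominated convergence against the total-variation measures $|dK|$ and $|d\tilde K|$, so that $\int_s^t\langle X_u-J_\lambda\tilde X_u,dK_u\rangle+\int_s^t\langle\tilde X_u-J_\lambda X_u,d\tilde K_u\rangle\to\int_s^t\langle X_u-\tilde X_u,dK_u-d\tilde K_u\rangle$; on the $du$ side, the $O(\lambda)$ remainders vanish because $\lambda\mathcal{A}_\lambda z\to 0$ on $\overline{D(\mathcal{A})}$. I expect this limit step to be the main obstacle, since $|\mathcal{A}_\lambda z|$ can blow up when $X$ or $\tilde X$ approaches the topological boundary of $D(\mathcal{A})$; the standard remedy is to first stop at exit times of compact subsets of the non-empty interior $\hbox{Int}(D(\mathcal{A}))$ (a standing assumption of the paper), run the argument on each stopped process where $\mathcal{A}_\lambda X,\mathcal{A}_\lambda\tilde X$ admit uniform bounds of the form $|\mathcal{A}^0\cdot|$, and then remove the localization by letting the stopping times tend to $T$.
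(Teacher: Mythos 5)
The paper itself offers no proof of this lemma (it is quoted from C\'epa), so your argument has to stand on its own, and it does not: the symmetrization step contains a sign error that is fatal. Adding your two test inequalities and expanding with $y=J_\lambda y+\lambda\mathcal{A}_\lambda y$ gives
\begin{equation*}
\int_s^t\langle X_u-\tilde X_u,\,dK_u-d\tilde K_u\rangle\;\geq\;-\int_s^t\langle J_\lambda X_u-J_\lambda\tilde X_u,\,\mathcal{A}_\lambda X_u-\mathcal{A}_\lambda\tilde X_u\rangle\,du\;+\;O(\lambda).
\end{equation*}
The monotonicity inequality you invoke makes the first term on the right \emph{nonpositive}; it therefore bounds the right-hand side from above by $O(\lambda)$, whereas what you need is a lower bound tending to $0$. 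That term is not itself $O(\lambda)$: under the paper's Assumption \ref{sec2-ass2.11} (so $D(\mathcal{A})=\mathbb{R}^d$ and $\mathcal{A}^\circ$ is locally bounded) it converges to $\int_s^t\langle X_u-\tilde X_u,\mathcal{A}^\circ(X_u)-\mathcal{A}^\circ(\tilde X_u)\rangle\,du$, which is nonnegative and in general strictly positive (already so for a single-valued smooth $\mathcal{A}=\nabla\phi$, where it equals the quantity you are trying to bound). So the limit of your inequality is $I\geq-\int_s^t\langle X_u-\tilde X_u,\mathcal{A}^\circ(X_u)-\mathcal{A}^\circ(\tilde X_u)\rangle\,du$, a true but strictly weaker statement that does not yield $I\geq 0$. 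No nonnegative recombination of your two test inequalities avoids this cross term, because the information "$d\tilde K$ behaves like an element of $\mathcal{A}(\tilde X)\,dt$ plus a normal-cone singular part" is never extracted. The localization at exit times that you flag as "the main obstacle" is a real but secondary technical point; the sign problem above is the one that sinks the proof.

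A route that does work (and is essentially C\'epa's): dominate both Stieltjes measures and Lebesgue measure by $\nu:=|dK|+|d\tilde K|+du$ and write $dK=\phi\,d\nu$, $d\tilde K=\tilde\phi\,d\nu$, $du=g\,d\nu$. Applying the defining inequality of $\mathscr{A}$ with \emph{constant} test pairs $(a,b)$ running over a countable dense subset of $Gr(\mathcal{A})$, and using Lebesgue differentiation with respect to $\nu$, one gets that for $\nu$-a.e.\ $u$ and every $(a,b)\in Gr(\mathcal{A})$, $\langle X_u-a,\phi_u-bg_u\rangle\geq 0$ and $\langle\tilde X_u-a,\tilde\phi_u-bg_u\rangle\geq 0$. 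Where $g_u>0$, maximal monotonicity forces $(X_u,\phi_u/g_u),(\tilde X_u,\tilde\phi_u/g_u)\in Gr(\mathcal{A})$ and monotonicity gives $\langle X_u-\tilde X_u,\phi_u-\tilde\phi_u\rangle\geq 0$; where $g_u=0$, taking $a=\tilde X_u$ in the first inequality and $a=X_u$ in the second (allowed after passing to the closure of $D(\mathcal{A})$) and adding gives the same conclusion. Integrating against $\nu$ yields the lemma. Your Yosida resolvents could still be used as a technical device inside such an argument, but not through the symmetric pairing you propose.
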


\begin{lemma}\label{sec2-lem2.4}(\cite{Cepa})
Let $\mathcal{A}$ be a maximal monotone operator on $\mathbb{R}^d$, then, for each $x\in D(\mathcal{A})$, $\mathcal{A}(x)$ is a closed and convex subset of $\mathbb{R}^d$. Let $\mathcal{A}^\circ(x):=proj_{\mathcal{A}(x)}(0)$ be the minimal section of $\mathcal{A}$, where $proj_{D}$ is designated as the projection on every closed and convex subset $D$ on $\mathbb{R}^d$ and $proj_{\emptyset}(0)=\infty$. Then
$$
x\in D(\mathcal{A})\Leftrightarrow |\mathcal{A}^\circ(x)|<\infty.
$$
\end{lemma}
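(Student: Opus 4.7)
The plan is to prove the two assertions in sequence, both of which follow rather directly from the definition of maximal monotonicity given in Definition \ref{sec2-def2.2}.

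\emph{Step 1: closedness of $\mathcal{A}(x)$.} Fix $x\in D(\mathcal{A})$ and take a sequence $y_n\in\mathcal{A}(x)$ with $y_n\to y$ in $\mathbb{R}^d$. Since $(x,y_n)\in Gr(\mathcal{A})$ for each $n$, monotonicity yields $\langle x-x',y_n-y'\rangle\geq 0$ for every $(x',y')\in Gr(\mathcal{A})$. Passing to the limit gives $\langle x-x',y-y'\rangle\geq 0$ for all such $(x',y')$, and the maximal-monotonicity characterization in Definition \ref{sec2-def2.2}(ii) then forces $(x,y)\in Gr(\mathcal{A})$, i.e.\ $y\in\mathcal{A}(x)$.

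\emph{Step 2: convexity of $\mathcal{A}(x)$.} For $y_1,y_2\in\mathcal{A}(x)$ and $\lambda\in[0,1]$, set $y_\lambda:=\lambda y_1+(1-\lambda)y_2$. Writing $\langle x-x',y_\lambda-y'\rangle=\lambda\langle x-x',y_1-y'\rangle+(1-\lambda)\langle x-x',y_2-y'\rangle$ and using that each summand is nonnegative for every $(x',y')\in Gr(\mathcal{A})$, we again invoke maximality to conclude $y_\lambda\in\mathcal{A}(x)$.

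\emph{Step 3: the equivalence.} By definition $x\in D(\mathcal{A})$ means $\mathcal{A}(x)\neq\emptyset$. If this holds, then by Steps 1--2 the set $\mathcal{A}(x)$ is a nonempty closed convex subset of $\mathbb{R}^d$, so the Hilbert-space projection $\operatorname{proj}_{\mathcal{A}(x)}(0)$ exists as an element of $\mathbb{R}^d$; hence $|\mathcal{A}^\circ(x)|<\infty$. Conversely, the convention $\operatorname{proj}_{\emptyset}(0)=\infty$ means that $|\mathcal{A}^\circ(x)|<\infty$ can only occur when $\mathcal{A}(x)\neq\emptyset$, i.e.\ when $x\in D(\mathcal{A})$.

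There is no real obstacle in this proof: the only subtle point is recognizing that closedness and convexity are both consequences of the defining inequality of maximal monotonicity together with a simple limit/convex-combination argument, after which the equivalence is essentially a tautology given the convention $\operatorname{proj}_{\emptyset}(0)=\infty$.
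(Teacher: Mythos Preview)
Your proof is correct. The paper does not actually prove this lemma; it is stated with a citation to C\'{e}pa \cite{Cepa} and used as a known background result, so there is no original argument to compare against. Your Steps~1--3 supply exactly the standard verification that the paper omits: closedness and convexity of $\mathcal{A}(x)$ follow from maximality via limit and convex-combination arguments, and the final equivalence is then immediate from the convention $\operatorname{proj}_{\emptyset}(0)=\infty$.
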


\subsection{Derivatives for functions on $\mathcal{M}_2(\mathbb{R}^d)$}\label{sec2.4}
In the rest of the section, we recall the definition of $L$-derivative for functions on $\mathcal{M}_2(\mathbb{R}^d)$. The definition was first introduced by Lions \cite{Lions}. Moreover, he used some abstract probability spaces to describe the $L$-derivatives. Here, for the convenience to understand the definition, we apply a straight way to state it (Ref. \cite{P.Ren}). Let $I$ be the identify map on $\mathbb{R}^d$. For $\mu\in\mathcal{M}_2(\mathbb{R}^d)$ and $\phi\in L^2(\mathbb{R}^d,\mathscr{B}(\mathbb{R}^d),\mu;\mathbb{R}^d)$, $\mu(\phi):=\int_{\mathbb{R}^d}\phi(x)\mu(dx)$ and  $\mu\circ(I+\phi)^{-1}\in\mathcal{M}_2(\mathbb{R}^d).$

\begin{definition}\label{sec2-def2.5}
\begin{itemize}
\item [(i)] A function $h:\mathcal{M}_2(\mathbb{R}^d)\rightarrow\mathbb{R}$ is called $L$-differentiable at $\mu\in\mathcal{M}_2(\mathbb{R}^d)$, if the functional
$$
L^2(\mathbb{R}^d,\mathscr{B}(\mathbb{R}^d),\mu;\mathbb{R}^d)\ni\phi\rightarrow h(\mu\circ(I+\phi)^{-1})
$$
is Fr\'{e}chet differentiable at $0\in L^2(\mathbb{R}^d,\mathscr{B}(\mathbb{R}^d),\mu;\mathbb{R}^d)$; that is, there exists a unique $\xi\in L^2(\mathbb{R}^d,\mathscr{B}(\mathbb{R}^d),\mu;\mathbb{R}^d)$ such that
$$
\lim_{\mu(|\phi|^2)\rightarrow0}\frac{h(\mu\circ(I+\phi)^{-1})-h(\mu)-\mu(\langle\xi,\phi\rangle)}{\sqrt{\mu(|\phi|^2)}}=0.
$$
In the case, we denote $\partial_\mu h(\mu)=\xi$ and call it the $L$-derivative of $h$ at $\mu$.
\item [(ii)] A function $h:\mathcal{M}_2(\mathbb{R}^d)\rightarrow\mathbb{R}$ is called $L$-differentiable on $\mathcal{M}_2(\mathbb{R}^d)$ if $L$-derivative $\partial_\mu h(\mu)$ exists for all $\mu\in\mathcal{M}_2(\mathbb{R}^d)$.
\item [(iii)] By the same way, $\partial_\mu^2h(\mu)(y,y^{'})$ for $y,y^{'}\in\mathbb{R}^d$ can be defined.
\end{itemize}
\end{definition}

Next, we introduce some related spaces.
\begin{definition}\label{sec2-def2.6}
The function $h$ is said to be in $C^2(\mathcal{M}_2(\mathbb{R}^d))$, if $\partial_\mu h$ is continuous, for any $\mu\in\mathcal{M}_2(\mathbb{R}^d)$, $\partial_\mu h(\mu)(\cdot)$ is differentiable, and its derivative $\partial y\partial_\mu h:\mathcal{M}_2(\mathbb{R}^d)\times\mathbb{R}^d\rightarrow\mathbb{R}^d\otimes\mathbb{R}^d$ is continuous, and for any $y\in\mathbb{R}^d$, $\partial_\mu h(\cdot)(y)$ is differentiable, and its derivative $\partial_\mu^2h:\mathcal{M}_2(\mathbb{R}^d)\times\mathbb{R}^d\times\mathbb{R}^d\rightarrow\mathbb{R}^d\otimes\mathbb{R}^d$
is continuous.
\end{definition}

\begin{definition}\label{sec2-def2.7}
\begin{itemize}
\item [(i)] The function $h:[0,T]\times\mathbb{R}^d\times\mathcal{M}_2(\mathbb{R}^d)\rightarrow\mathbb{R}$ is said to be in $C^{1,2,2}([0,T]\times\mathbb{R}^d\times\mathcal{M}_2(\mathbb{R}^d))$, if $h(t,x,\mu)$ is $C^1$ in $t\in[0,T]$, $C^2$ in $x\in\mathbb{R}^d$ and $\mu\in\mathcal{M}_2(\mathbb{R}^d)$ respectively, and its derivatives
$$
\partial_th(t,x,\mu),\quad \partial_xh(t,x,\mu),\quad \partial_x^2h(t,x,\mu),\quad \partial_\mu h(t,x,\mu)(y),
$$
$$
\partial y\partial_\mu h(t,x,\mu)(y),\quad \partial_\mu^2 h(t,x,\mu)(y,y^{'})
$$
are jointly continuous in the corresponding variable family $(t,x,\mu)$, $(t,x,\mu,y)$ or $(t,x,\mu,y,y^{'})$.
\item [(ii)] The function $h:[0,T]\times\mathbb{R}^d\times\mathcal{M}_2(\mathbb{R}^d)\rightarrow\mathbb{R}$ is said to be in $C^{1,2,2}_b([0,T]\times\mathbb{R}^d\times\mathcal{M}_2(\mathbb{R}^d))$, if $h\in C^{1,2,2}([0,T]\times\mathbb{R}^d\times\mathcal{M}_2(\mathbb{R}^d))$ and all its derivatives are uniformly bounded on $[0,T]\times\mathbb{R}^d\times\mathcal{M}_2(\mathbb{R}^d)$. If $h\in C^{1,2,2}([0,T]\times\mathbb{R}^d\times\mathcal{M}_2(\mathbb{R}^d))$ or $h\in C^{1,2,2}_b([0,T]\times\mathbb{R}^d\times\mathcal{M}_2(\mathbb{R}^d))$ and $h$ is independent of $t$, we write $h\in C^{2,2}(\mathbb{R}^d\times\mathcal{M}_2(\mathbb{R}^d))$ or $h\in C^{2,2}_b(\mathbb{R}^d\times\mathcal{M}_2(\mathbb{R}^d))$.
\item [(iii)] The function $h:[0,T]\times\mathbb{R}^d\times\mathcal{M}_2(\mathbb{R}^d)\rightarrow\mathbb{R}$ is said to be in $C^{1,2,2;1}_b([0,T]\times\mathbb{R}^d\times\mathcal{M}_2(\mathbb{R}^d))$, if $h\in C^{1,2,2}_b([0,T]\times\mathbb{R}^d\times\mathcal{M}_2(\mathbb{R}^d))$ and all its derivatives are Lipschitz continuous. In addition, if $h$ is independent of $t$, we write $h\in C^{2,2;1}_b(\mathbb{R}^d\times\mathcal{M}_2(\mathbb{R}^d))$.
\item [(iv)] The function $h:[0,T]\times\mathbb{R}^d\times\mathcal{M}_2(\mathbb{R}^d)\rightarrow\mathbb{R}$ is said to be in $C^{1,2,2}_{b,+}([0,T]\times\mathbb{R}^d\times\mathcal{M}_2(\mathbb{R}^d))$, if $h\in C^{1,2,2}_b([0,T]\times\mathbb{R}^d\times\mathcal{M}_2(\mathbb{R}^d))$ and $h\geq0$. If $h\in C^{1,2,2;1}_b([0,T]\times\mathbb{R}^d\times\mathcal{M}_2(\mathbb{R}^d))$ and $h\geq0$, we write If $h\in C^{1,2,2;1}_{b,+}([0,T]\times\mathbb{R}^d\times\mathcal{M}_2(\mathbb{R}^d))$.
\end{itemize}
\end{definition}

Now, we consider the multi-valued MVSDEs with jumps \eqref{sec1-eq1.1}.

\begin{definition}\label{sec2-def2.8}
For any $T>0$, a pair of $\mathscr{F}_t$-adapted processes $(X_.,K_.)$ is called a strong solution of Equation \eqref{sec1-eq1.1} with the initial value $\xi$ if the pair $(X_.,K_.)$
on a filtered probability space $(\Omega,\mathscr{F},\{\mathscr{F}_t\}_{t\in[0,T]},\mathbb{P})$
such that
\begin{itemize}
\item [(i)] $\mathbb{P}(X_0=\xi)=1$,
\item [(ii)] $(X_.(\omega),K_.(\omega))\in\mathscr{A}$ a.s. $\mathbb{P}$,
\item [(iii)] it holds that
$$
\mathbb{P}\Big\{\int_0^T\Big(|b(s,X_s,\mathscr{L}_{X_s})|+\|\sigma(s,X_s,\mathscr{L}_{X_s})\|^2
+\int_{\mathbb{U}_0}|f(s,X_s,\mathscr{L}_{X_s},u)|^2v(du)\Big)ds<+\infty\Big\}=1,
$$
and
\begin{align*}
X_t=\xi-K_t&+\int_0^tb(s,X_s,\mathscr{L}_{X_s})ds+\int_0^t\sigma(s,X_s,\mathscr{L}_{X_s})dB_s\\
&+\int_{[0,t]\times U_0}f(s,X_{s-},\mathscr{L}_{X_s},u)\widetilde{N}(ds,du),\quad t\in[0,T]\quad a.s..
\end{align*}
\end{itemize}
\end{definition}

In the following, we give some conditions to assure the existence and uniqueness of strong solutions for Equation \eqref{sec1-eq1.1}. We make the following assumptions:

\begin{assumption}\label{sec2-ass2.9}
\begin{itemize}
\item[(i)]
{
For any $x,y\in\mathbb{R}^d$, $\mu,\nu\in\mathcal{M}_2(\mathbb{R}^d)$ and $t\in[0,T]$, $u\in\mathbb{U}_0$, there exists an non-decreasing bounded function $L_1:[0,\infty)\rightarrow(0,\infty)$ such that
$$
|b(t,x,\mu)-b(t,y,\nu)|^2+\|\sigma(t,x,\mu)-\sigma(t,y,\nu)\|^2\leq L_1(t)\kappa(\beta|x-y|^2+\rho^2(\mu,\nu)),
$$
$$
|f(t,x,\mu,u)-f(t,y,\nu,u)|^2\leq L_1(t)\|u\|^2_{\mathbb{U}}\varphi(\beta|x-y|^2+\rho^2(\mu,\nu)).
$$
where $\beta$ is a positive constant,
$\kappa(\cdot),\varphi(\cdot)$ are two concave nondecreasing functions such that $\kappa(0)=\varphi(0)=0$, and
$$
\int_{0^+}\frac{1}{\kappa(u)+\varphi(u)+u}du=\infty.
$$
}
\item[(ii)]
{
For any $x,y\in\mathbb{R}^d$, $\mu,\nu\in\mathcal{M}_2(\mathbb{R}^d)$ and $t\in[0,T]$, $u\in\mathbb{U}_0$, there exists an non-decreasing bounded function $L_2:[0,\infty)\rightarrow(0,\infty)$ such that
$$
|b(t,x,\mu)|^2+\|\sigma(t,x,\mu)\|^2\leq L_2(t)(1+|x|^2+\|\mu\|_2^2),
$$
$$
|f(t,x,\mu,u)|^2\leq L_2(t)\|u\|^2_{\mathbb{U}}(1+|x|^2+\|\mu\|_2^2).
$$
}
\end{itemize}
\end{assumption}

\begin{example}\label{sec2-ex2.10}
Let $L>0$ and $\delta\in(0,1/e)$ be sufficiently small. Define\\
$\kappa_1(u)=\varphi_1(u)=Lu,u\geq 0.$\\
$\kappa_2(u)=\varphi_2(u)=\left\{
  \begin{array}{ll}
    u\log(u^{-1}), & {0\leq u\leq\delta;} \\
    \delta\log(\delta^{-1})+\kappa{'}_2(\delta-)(u-\delta), & {u>\delta.}
  \end{array}
\right.$\\
$\kappa_3(u)=\varphi_3(u)=\left\{
  \begin{array}{ll}
    u\log(u^{-1})\log\log(u^{-1}), & {0\leq u\leq\delta;} \\
    \delta\log(\delta^{-1})\log\log(\delta^{-1})+\kappa{'}_3(\delta-)(u-\delta), & {u>\delta,}
  \end{array}
\right.$\\
where $\kappa{'}(\varphi{'})$ denotes the derivative of the function $\kappa(\varphi)$. They are all concave nondecreasing
functions satisfying $\int_{0^+}\frac{1}{\kappa_i(u)+\varphi_i(u)+u}du=\infty(i=1,2,3)$. Furthermore, we observed that the
Lipschitz condition is a special case of our proposed condition.
\end{example}

\begin{assumption}\label{sec2-ass2.11}
$\mathcal{A}$ is a maximal monotone operator with $D(\mathcal{A})=\mathbb{R}^d$.
\end{assumption}

Similar to the proof of the existence and uniqueness of solutions (\cite{J. Gong}, \cite{Ren3}), we can easily obtain the following result under the Assumptions \ref{sec2-ass2.9} and \ref{sec2-ass2.11}. We omit the proof.
\begin{theorem}\label{sec2-th2.12}
If $X_0=\xi$ is a $\mathscr{F}_0$-measurable random variable with $\mathbb{E}|\xi|^2<\infty$ and Assumptions \ref{sec2-ass2.9}, \ref{sec2-ass2.11} hold, then there exists a unique solution $(X,K)$ to Equation \eqref{sec1-eq1.1}. Moreover, this solution satisfies
$\mathbb{E}(\sup_{0\leq{t}\leq{T}}|X_t|^2)\leq C$.
\end{theorem}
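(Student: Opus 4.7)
The plan is to combine a Picard iteration in the law variable with the classical existence/uniqueness theory for multi-valued SDEs with jumps (as in Ren--Wu and Gong--Qiao), closing the loop via a Bihari-type argument to accommodate the Osgood condition in Assumption \ref{sec2-ass2.9}(i).

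First I would freeze the measure flow. Starting from $X^{(0)}_t \equiv \xi$ with law $\mu^{(0)}_t=\mathscr{L}_\xi$, I define $(X^{(n+1)},K^{(n+1)})$ as the unique strong solution of the (now distribution-free) multi-valued SDE with jumps
\begin{equation*}
\begin{split}
dX^{(n+1)}_t \in & -\mathcal{A}(X^{(n+1)}_t)dt + b(t,X^{(n+1)}_t,\mu^{(n)}_t)dt + \sigma(t,X^{(n+1)}_t,\mu^{(n)}_t)dB_t \\
& + \int_{\mathbb{U}_0} f(t,X^{(n+1)}_{t-},\mu^{(n)}_t,u)\widetilde{N}(dt,du),
\end{split}
\end{equation*}
whose well-posedness is guaranteed by Ren--Wu \cite{Ren3} under Assumptions \ref{sec2-ass2.9}--\ref{sec2-ass2.11}, and set $\mu^{(n+1)}_t := \mathscr{L}_{X^{(n+1)}_t}$. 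The next step is a uniform second-moment bound: applying It\^{o}'s formula to $|X^{(n+1)}_t|^2$, one uses Assumption \ref{sec2-ass2.9}(ii) for the drift/diffusion/jump estimates, Lemma \ref{sec2-lem2.1} together with the BDG inequality for the compensated Poisson integral, and for the $dK^{(n+1)}$ term one fixes an arbitrary $(a_0,b_0)\in Gr(\mathcal{A})$ (non-empty by Assumption \ref{sec2-ass2.11}) to obtain $\langle X^{(n+1)}_t-a_0, dK^{(n+1)}_t\rangle \ge \langle X^{(n+1)}_t-a_0, b_0\rangle dt$. Gronwall then yields $\mathbb{E}\sup_{t\le T}|X^{(n)}_t|^2 \le C$ uniformly in $n$, which in particular keeps $\mu^{(n)}_t$ in a bounded set of $\mathcal{M}_2(\mathbb{R}^d)$.

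Second, I would prove the sequence is Cauchy. Writing $Z^{(n)}_t := X^{(n+1)}_t - X^{(n)}_t$, It\^{o}'s formula gives
\begin{equation*}
\begin{split}
d|Z^{(n)}_t|^2 = & -2\langle Z^{(n)}_t, dK^{(n+1)}_t - dK^{(n)}_t\rangle + 2\langle Z^{(n)}_t, \Delta b_n\rangle dt + \|\Delta \sigma_n\|^2 dt \\
& + 2\langle Z^{(n)}_t, \Delta \sigma_n dB_t\rangle + \int_{\mathbb{U}_0}|\Delta f_n|^2 N(dt,du) + (\text{martingale}),
\end{split}
\end{equation*}
with obvious shorthand for the increments in the coefficients. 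The multi-valued term is nonnegative by Lemma \ref{sec2-lem2.3} and is simply dropped. Taking expectations and using Assumption \ref{sec2-ass2.9}(i) together with $\rho^2(\mu^{(n)}_s,\mu^{(n-1)}_s)\le \mathbb{E}|Z^{(n-1)}_s|^2$, one arrives at
\begin{equation*}
\mathbb{E}\sup_{s\le t}|Z^{(n)}_s|^2 \le C\int_0^t L_1(s)\bigl[\kappa\bigl(\beta \mathbb{E}|Z^{(n)}_s|^2 + \mathbb{E}|Z^{(n-1)}_s|^2\bigr) + \varphi(\cdots)\bigr]ds.
\end{equation*}
A standard Bihari/Osgood bootstrap based on $\int_{0^+}\!du/(\kappa(u)+\varphi(u)+u)=\infty$ (the exact argument used in \cite{J. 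Gong}) shows that $\sup_n \mathbb{E}\sup_{s\le T}|Z^{(n)}_s|^2 \to 0$, so $\{X^{(n)}\}$ is Cauchy in the relevant $L^2$ space and $K^{(n)}$ is Cauchy too by difference.

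Third, I pass to the limit: let $X_t$ be the limit, $\mu_t := \mathscr{L}_{X_t}$, and $K_t$ the limit of $K^{(n)}_t$. The stochastic/jump integrals pass to the limit by the isometry in Lemma \ref{sec2-lem2.1} and the continuity of $b,\sigma,f$ in $(x,\mu)$ (Assumption \ref{sec2-ass2.9}(i)). That $(X,K)\in\mathscr{A}$ follows because $\mathscr{A}$ is closed under uniform limits of $X^{(n)}$ with uniformly bounded variation of $K^{(n)}$ (inherited from the moment estimates). For uniqueness, given two solutions $(X,K),(\tilde X,\tilde K)$ with the same initial law, I repeat the It\^{o} computation on $|X_t-\tilde X_t|^2$; Lemma \ref{sec2-lem2.3} again kills the multi-valued term, and Bihari's inequality applied to the resulting nonlinear integral inequality forces $\mathbb{E}\sup_{s\le T}|X_s-\tilde X_s|^2=0$. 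The moment bound $\mathbb{E}\sup_{t\le T}|X_t|^2 \le C$ is simply inherited from the uniform estimate in Step~1.

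The main obstacle is the interaction between three non-standard features at once: the multi-valued drift $dK_t$ (unbounded variation, no $L^p$ norm a priori), the McKean--Vlasov law-dependence (which blocks any simple pathwise contraction), and the compensated Poisson jumps combined with only Osgood-continuity rather than Lipschitz continuity of the coefficients. This forces one to keep all estimates at the level of $\mathbb{E}\sup_{s\le t}|\cdot|^2$, to use Lemma \ref{sec2-lem2.3} rather than any pointwise monotonicity, and to invoke Bihari's inequality twice (once for Cauchyness, once for uniqueness) instead of Gronwall; once this framework is set up, everything else reduces to the estimates already available in \cite{Ren3} and \cite{J. Gong}.
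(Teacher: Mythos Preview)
Your proposal is correct and matches what the paper intends: the paper does not actually give a proof of Theorem~\ref{sec2-th2.12} but simply states that it follows ``similar to the proof of the existence and uniqueness of solutions'' in \cite{J. Gong} and \cite{Ren3}, and omits the details. Your Picard iteration in the law variable, combined with the Ren--Wu theory for the frozen-measure multi-valued SDE with jumps and the Bihari closure under the Osgood condition, is precisely the synthesis of those two references that the paper is pointing to.
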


\begin{lemma}\label{sec2-lem2.13}(Bihari's inequality~~\cite{Mao3} )
Let $T>0$, and $c>0$. Let $\psi: \mathbb{R}_+\rightarrow\mathbb{R}_+$ be a continuous nondecreasing function such that $\psi(t)>0$ for all $t>0$. Let $u(\cdot)$ be a Borel measurable bounded nonnegative function on [0,T], and let $v(\cdot)$ be a nonnegative integrable function on [0,T]. If
$
u(t)\leq c+\int_0^tv(s)\psi(u(s))ds$  for all $0\leq t\leq T,
$
then
$$
u(t)\leq G^{-1}(G(c)+\int_0^tv(s)ds)
$$
holds for all $t\in[0,T]$ such that
$$
G(c)+\int_0^tv(s)ds\in Dom(G^{-1}),
$$
where
$
G(r)=\int_1^r\frac{ds}{\psi(s)}$  on $r>0,
$
and $G^{-1}$ is the inverse function of G.
\end{lemma}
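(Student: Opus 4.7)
The plan is to convert the integral inequality into a differential inequality for the right-hand side and then integrate by separation of variables; this is the standard ODE comparison reduction underlying Bihari's inequality. First, I would define
$$U(t) := c + \int_0^t v(s)\psi(u(s))\,ds, \qquad t\in[0,T],$$
so that $u(t)\leq U(t)$ by hypothesis and $U(0)=c>0$. Since $v\geq 0$ and $\psi\geq 0$, the function $U$ is absolutely continuous and nondecreasing; moreover, by monotonicity of $\psi$ together with $u(s)\leq U(s)$,
$$U'(t)=v(t)\psi(u(t))\leq v(t)\psi(U(t))\quad\text{for a.e. } t\in[0,T].$$

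Because $U(t)\geq c>0$ and $\psi>0$ on $(0,\infty)$, I can divide by $\psi(U(t))$ to get $U'(t)/\psi(U(t))\leq v(t)$ almost everywhere. By the chain rule, the left side equals $\frac{d}{dt}G(U(t))$, where $G(r)=\int_1^r ds/\psi(s)$. Integrating from $0$ to $t$ yields
$$G(U(t))\leq G(c)+\int_0^t v(s)\,ds.$$
Since $\psi>0$ on $(0,\infty)$, $G$ is strictly increasing and hence admits a well-defined inverse $G^{-1}$ on its range. Applying the monotone map $G^{-1}$, valid whenever the right-hand side lies in $\mathrm{Dom}(G^{-1})$, gives $U(t)\leq G^{-1}\!\bigl(G(c)+\int_0^t v(s)\,ds\bigr)$, and the conclusion follows from $u(t)\leq U(t)$.

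The one subtle point I would expect is justifying the chain rule for $G\circ U$ when $\psi$ is merely continuous rather than $C^1$. Since $U$ is absolutely continuous on $[0,T]$ and takes values in the compact interval $[c,\max_{[0,T]}U]\subset(0,\infty)$, on which $1/\psi$ is continuous and hence bounded, the composition $G\circ U$ is itself absolutely continuous with the claimed derivative $U'(t)/\psi(U(t))$ almost everywhere; this is the standard absolutely-continuous chain rule. A cleaner alternative, if one wishes to avoid even this mild technicality, is to perturb the initial constant by $\varepsilon>0$, run the argument with $c+\varepsilon$ in place of $c$, and then let $\varepsilon\downarrow 0$, using the continuity of $G^{-1}$. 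I do not anticipate further obstacles, as the argument is essentially the classical one-dimensional Gronwall--Bihari comparison principle.
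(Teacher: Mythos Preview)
Your argument is correct and is precisely the classical proof of Bihari's inequality. Note, however, that the paper does not supply its own proof of this lemma: it is quoted as a known result from Mao's textbook \cite{Mao3}, so there is no in-paper argument to compare against.
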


\section{Stochastic averaging principle}\label{sec3}
In this section, we shall study the averaging principle for multi-valued McKean-Vlasov stochastic differential equations with jumps. Let us consider the standard form of \eqref{sec3-eq3.1}
\begin{equation}\label{sec3-eq3.1}
\begin{split}
X^\epsilon_t=\xi-\epsilon K_t
&+\epsilon\int_0^tb(s,X^\epsilon_s,\mathscr{L}_{X^\epsilon_s})ds+\sqrt\epsilon\int_0^t\sigma(s,X^\epsilon_s,\mathscr{L}_{X^\epsilon_s})dB_s\\
&+\sqrt\epsilon\int_{[0,t]\times U_0}
f(s,X^\epsilon_{s-},\mathscr{L}_{X^\epsilon_s},u)\widetilde{N}(ds,du)
\end{split}
\end{equation}
with the initial value $X^\epsilon_0=\xi$. Here the coefficients $b,\sigma$ and $f$ have the same conditions as in Assumption \ref{sec2-ass2.9} and $\epsilon\in[0,\epsilon_0]$ is a positive small parameter with $\epsilon_0$ is  a fixed number. Thus, under Assumptions \ref{sec2-ass2.9} and \ref{sec2-ass2.11}, \eqref{sec3-eq3.1} has a unique solution $(X^\epsilon_t,K_t),t\in[0,T]$, Moreover, this solution satisfies
$$
\mathbb{E}(\sup_{0\leq{t}\leq{T}}|X^\epsilon_t|^2)\leq C_\epsilon.
$$

Our objective is to show that the solution $(X^\epsilon_t,K_t),t\in[0,T]$ could be approximated in the sense of the mean square convergence by the solution $(Y^\epsilon_t,\bar{K}_t),t\in[0,T]$ of the following averaged equation
\begin{equation}\label{sec3-eq3.2}
\begin{split}
Y^\epsilon_t=\xi-\epsilon\bar{K}_t
&+\epsilon\int_0^t\bar{b}(Y^\epsilon_s,\mathscr{L}_{Y^\epsilon_s})ds+\sqrt\epsilon\int_0^t\bar{\sigma}(Y^\epsilon_s,\mathscr{L}_{Y^\epsilon_s})dB_s\\
&+\sqrt\epsilon\int_{[0,t]\times U_0}
\bar{f}(Y^\epsilon_{s-},\mathscr{L}_{Y^\epsilon_s},u)\widetilde{N}(ds,du),
\end{split}
\end{equation}
where $\bar{b}:\mathbb{R}^d\times\mathcal{M}_2(\mathbb{R}^d)\rightarrow\mathbb{R}^d$, $\bar{\sigma}:\mathbb{R}^d\times\mathcal{M}_2(\mathbb{R}^d)\rightarrow\mathbb{R}^{d\times m}$ and $\bar{f}:\mathbb{R}^d\times\mathcal{M}_2(\mathbb{R}^d)\times\mathbb{U}_0\rightarrow\mathbb{R}^d$ are Borel measurable functions.

\begin{assumption}\label{sec3-ass3.1}(Averaging conditions) For any $x\in\mathbb{R}^d$, $\mu\in\mathcal{M}_2(\mathbb{R}^d)$ and $T_1>0$,
there exist three positive bounded functions $\psi_i:(0,\infty)\rightarrow(0,\infty),i=1,2,3$ with $\lim_{{T_1}\to\infty}\psi_i({T_1})=0$, such that
$$
\frac1{T_1}\int_0^{T_1}|b(s,x,\mu)-\bar{b}(x,\mu)|^2ds
\leq\psi_1(T_1)(1+|x|^2+\|\mu\|_2^2),
$$
$$
\frac1{T_1}\int_0^{T_1}\|\sigma(s,x,\mu)-\bar{\sigma}(x,\mu)\|^2ds
\leq\psi_2(T_1)(1+|x|^2+\|\mu\|_2^2),
$$
$$
\frac1{T_1}\int_0^{T_1}|f(s,x,\mu,u)-\bar{f}(x,\mu,u)|^2ds
\leq\psi_3(T_1)\|u\|_{\mathbb{U}}^2(1+|x|^2+\|\mu\|_2^2).
$$
\end{assumption}

\begin{remark}\label{sec3-rem3.1}
For any $x,y\in\mathbb{R}^d$, $\mu,\nu\in\mathcal{M}_2(\mathbb{R}^d)$ and $T_1>0$, we have
\begin{equation}\label{sec3-eq3.3}
\begin{split}
&|\bar{b}(x,\mu)-\bar{b}(y,\nu)|^2+\|\bar{\sigma}(x,\mu)-\bar{\sigma}(y,\nu)\|^2\\
&\leq3\frac{1}{T_1}\int_0^{T_1}|b(s,x,\mu)-\bar{b}(x,\mu)|^2ds+3\frac{1}{T_1}\int_0^{T_1}|b(s,y,\nu)-\bar{b}(y,\nu)|^2ds\\
&\quad+3\frac{1}{T_1}\int_0^{T_1}|b(s,x,\mu)-b(s,y,\nu)|^2ds+3\frac{1}{T_1}\int_0^{T_1}\|\sigma(s,x,\mu)-\bar{\sigma}(x,\mu)\|^2ds\\
&\quad+3\frac{1}{T_1}\int_0^{T_1}\|\sigma(s,y,\nu)-\bar{\sigma}(y,\nu)\|^2ds+3\frac{1}{T_1}\int_0^{T_1}\|\sigma(s,x,\mu)-\sigma(s,y,\nu)\|^2ds\\
&\leq3(\psi_1(T_1)+\psi_2(T_1))(2+|x|^2+|y|^2+\|\mu\|_2^2+\|\nu\|_2^2)+3L_1(T_1)\kappa(\beta|x-y|^2+\rho^2(\mu,\nu)),
\end{split}
\end{equation}
\begin{align*}
|\bar{b}(x,\mu)|^2+\|\bar{\sigma}(x,\mu)\|^2
&\leq2\frac{1}{T_1}\int_0^{T_1}|b(s,x,\mu)-\bar{b}(x,\mu)|^2ds+2\frac{1}{T_1}\int_0^{T_1}|b(s,x,\mu)|^2ds\\
&\quad+2\frac{1}{T_1}\int_0^{T_1}\|\sigma(s,x,\mu)-\bar{\sigma}(x,\mu)\|^2ds+2\frac{1}{T_1}\int_0^{T_1}\|\sigma(s,x,\mu)\|^2ds\\
&\leq 2[\psi_1(T_1)+\psi_2(T_1)+L_2(T_1)](1+|x|^2+\|\mu\|_2^2).
\end{align*}
Taking $T_1\rightarrow\infty$, because $L_1(t)$ and $L_2(t)$ are bounded, there exists a constant M, such that
$$
|\bar{b}(x,\mu)-\bar{b}(y,\nu)|^2+\|\bar{\sigma}(x,\mu)-\bar{\sigma}(y,\nu)\|^2\leq M\kappa(\beta|x-y|^2+\rho^2(\mu,\nu)),
$$
$$
|\bar{b}(x,\mu)|^2+\|\bar{\sigma}(x,\mu)\|^2\leq M(1+|x|^2+\|\mu\|_2^2).
$$

\noindent{Similarly, for any x, y $\in\mathbb{R}^d$, $\mu,\nu\in\mathcal{M}_2(\mathbb{R}^d)$ and $T_1>0$, we
have}
\begin{equation}\label{sec3-eq3.4}
\begin{split}
&\quad|\bar{f}(x,\mu,u)-\bar{f}(y,\nu,u)|^2\\
&\leq3\frac{1}{T_1}\int_0^{T_1}|f(s,x,\mu,u)-\bar{f}(x,\mu,u)|^2ds
+3\frac{1}{T_1}\int_0^{T_1}|f(s,y,\nu,u)-\bar{f}(y,\nu,u)|^2ds\\
&\quad+3\frac{1}{T_1}\int_0^{T_1}|f(s,x,\mu,u)-f(s,y,\nu,u)|^2ds\\
&\leq3\psi_3(T_1)\|u\|_{\mathbb{U}}^2(2+|x|^2+|y|^2+\|\mu\|_2^2+\|\nu\|_2^2)
+3L_1(T_1)\|u\|_{\mathbb{U}}^2\varphi(\beta|x-y|^2+\rho^2(\mu,\nu)),
\end{split}
\end{equation}
\begin{align*}
|\bar{f}(x,\mu,u)|^2
&\leq2\frac{1}{T_1}\int_0^{T_1}|f(s,x,\mu,u)-\bar{f}(x,\mu,u)|^2ds
+2\frac{1}{T_1}\int_0^{T_1}|f(s,x,\mu,u)|^2ds\\
&\leq2(\psi_3(T_1)+L_2(T_1))\|u\|_{\mathbb{U}}^2(1+|x|^2+\|\mu\|_2^2).
\end{align*}
Taking $T_1\rightarrow\infty$, we have
$$
|\bar{f}(x,\mu,u)-\bar{f}(y,\nu,u)|^2\leq M\|u\|_{\mathbb{U}}^2\varphi(\beta|x-y|^2+\rho^2(\mu,\nu)),
$$
$$
|\bar{f}(x,\mu,u)|^2\leq M\|u\|_{\mathbb{U}}^2(1+|x|^2+\|\mu\|_2^2).
$$
Thus, the coefficients $\bar{b}, \bar{\sigma}, \bar{f}$ satisfy Assumption \ref{sec2-ass2.9}.
Therefore, under Assumption \ref{sec2-ass2.9} and \ref{sec2-ass2.11}, there is a unique solution $(Y^\epsilon_.,\bar{K}_.)$ to the averaged equation ({\ref{sec3-eq3.2}}).
Moreover, this solution satisfies $$\mathbb{E}(\sup_{0\leq{t}\leq{T}}|Y^{\epsilon}_t|^2)\leq{C_{\epsilon}}.$$
\end{remark}

\begin{theorem}\label{sec3-th3.2}
Suppose that $\mathbb{E}|\xi|^2<+\infty$. Then under Assumptions \ref{sec2-ass2.9}, \ref{sec2-ass2.11} and \ref{sec3-ass3.1}, the following averaging principle holds
$$
\lim_{\epsilon\rightarrow0}\mathbb{E}(\sup_{0\leq t\leq T}|X^{\epsilon}_t-Y^{\epsilon}_t|^2)=0.
$$
\end{theorem}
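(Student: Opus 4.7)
The plan is to set $Z^{\epsilon}_t:=X^{\epsilon}_t-Y^{\epsilon}_t$ and apply the multi-valued It\^{o} formula with jumps to $|Z^{\epsilon}_t|^2$. The key structural observation is that because both pairs $(X^{\epsilon},K)$ and $(Y^{\epsilon},\bar K)$ lie in $\mathscr{A}$, Lemma~\ref{sec2-lem2.3} forces $\langle Z^{\epsilon}_s,\,dK_s-d\bar K_s\rangle\ge 0$, so the bounded-variation contribution $-2\epsilon\int_0^t\langle Z^{\epsilon}_s,dK_s-d\bar K_s\rangle$ is non-positive and can be dropped. What remains are the drift integral $2\epsilon\int_0^t\langle Z^{\epsilon}_s,\,b(s,X^{\epsilon}_s,\mathscr{L}_{X^{\epsilon}_s})-\bar b(Y^{\epsilon}_s,\mathscr{L}_{Y^{\epsilon}_s})\rangle ds$, the diffusion and jump quadratic variations (each carrying a factor $\epsilon$), and the Brownian and compensated-Poisson martingale terms (each carrying a factor $\sqrt{\epsilon}$). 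The martingale pieces are taken to their suprema and controlled by Burkholder--Davis--Gundy together with Lemma~\ref{sec2-lem2.1}; Young's inequality then allows a small fraction of $\mathbb{E}\sup_{r\le t}|Z^{\epsilon}_r|^2$ to be absorbed into the left-hand side.

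Every coefficient difference is next split in the canonical way, for instance
\[
b(s,X^{\epsilon}_s,\mathscr{L}_{X^{\epsilon}_s})-\bar b(Y^{\epsilon}_s,\mathscr{L}_{Y^{\epsilon}_s})=\bigl[b(s,X^{\epsilon}_s,\mathscr{L}_{X^{\epsilon}_s})-b(s,Y^{\epsilon}_s,\mathscr{L}_{Y^{\epsilon}_s})\bigr]+\bigl[b(s,Y^{\epsilon}_s,\mathscr{L}_{Y^{\epsilon}_s})-\bar b(Y^{\epsilon}_s,\mathscr{L}_{Y^{\epsilon}_s})\bigr],
\]
and analogously for $\sigma-\bar\sigma$ and $f-\bar f$. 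The first brackets (the \emph{continuity} pieces) are dominated via Assumption~\ref{sec2-ass2.9} by $L_1(s)\kappa(\beta|Z^{\epsilon}_s|^2+\rho^2(\mathscr{L}_{X^{\epsilon}_s},\mathscr{L}_{Y^{\epsilon}_s}))$, with the jump analogue in $\varphi$; using $\rho^2\le\mathbb{E}|Z^{\epsilon}_s|^2$ and the concavity of $\kappa,\varphi$, these feed into the right-hand side as $\int_0^t L_1(s)\,[\kappa+\varphi]\bigl(C\,\mathbb{E}\sup_{r\le s}|Z^{\epsilon}_r|^2\bigr)ds$.

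The second brackets (the \emph{averaging} pieces) cannot be handled by Assumption~\ref{sec3-ass3.1} directly because $Y^{\epsilon}_s$ varies with $s$. Here I would introduce a Khasminskii-type partition $0=t_0<t_1<\cdots<t_n=T$ with mesh $\Delta=\Delta(\epsilon)$ and freeze $Y^{\epsilon}_s$ on each $[t_k,t_{k+1})$ at its left-endpoint value $\hat Y^{\epsilon}_s:=Y^{\epsilon}_{t_k}$. After freezing, the coefficients are state-independent in $s$ on each sub-interval, so Assumption~\ref{sec3-ass3.1} yields
\[
\int_{t_k}^{t_{k+1}}\bigl|b(s,\hat Y^{\epsilon}_{t_k},\mathscr{L}_{\hat Y^{\epsilon}_{t_k}})-\bar b(\hat Y^{\epsilon}_{t_k},\mathscr{L}_{\hat Y^{\epsilon}_{t_k}})\bigr|^2 ds\le\Delta\,\psi_1(\Delta)\bigl(1+|\hat Y^{\epsilon}_{t_k}|^2+\|\mathscr{L}_{\hat Y^{\epsilon}_{t_k}}\|_2^2\bigr),
\]
which sums to a total of order $T\,\psi_1(\Delta)(1+\mathbb{E}\sup_{s\le T}|Y^{\epsilon}_s|^2)$, and analogously for $\sigma,f$ with $\psi_2,\psi_3$. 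The freezing error is quantified by the increment estimate $\mathbb{E}|Y^{\epsilon}_s-\hat Y^{\epsilon}_s|^2\le C\epsilon\Delta$, which follows from the linear-growth bound in Remark~\ref{sec3-rem3.1} and Lemma~\ref{sec2-lem2.1}; composing with the concave $\kappa,\varphi$ produces an extra contribution of order $[\kappa+\varphi](C\epsilon\Delta)$.

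Putting everything together yields an integral inequality of the form
\[
u_{\epsilon}(t)\le C\bigl(\epsilon T\,\Psi(\Delta)+\epsilon T\,[\kappa+\varphi](C\epsilon\Delta)\bigr)+C\int_0^t L(s)\,[\kappa+\varphi+\mathrm{id}]\bigl(u_{\epsilon}(s)\bigr)ds,
\]
where $u_{\epsilon}(t):=\mathbb{E}\sup_{r\le t}|Z^{\epsilon}_r|^2$ and $\Psi:=\psi_1+\psi_2+\psi_3$. Calibrating $\Delta=\Delta(\epsilon)$ so that $\Delta\to\infty$ (hence $\Psi(\Delta)\to 0$) while $\epsilon\Delta\to 0$ (hence $[\kappa+\varphi](C\epsilon\Delta)\to 0$), e.g.\ $\Delta=\epsilon^{-1/2}$ truncated at $T$, sends the inhomogeneous term to $0$ as $\epsilon\to 0$, and Bihari's inequality (Lemma~\ref{sec2-lem2.13})---applied with the Osgood-type integrability of $1/(\kappa+\varphi+\mathrm{id})$ built into Assumption~\ref{sec2-ass2.9}---then propagates this vanishing to $u_{\epsilon}(T)$. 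The main obstacle is this simultaneous calibration: $\Delta$ must be large enough to exploit $\Psi(\Delta)\to 0$ at infinity yet small relative to $1/\epsilon$ so that the freezing error $\epsilon\Delta$ remains inside the domain of continuity of $\kappa,\varphi$ near zero; the jump quadratic variation $\epsilon\int_0^t\!\int_{\mathbb{U}_0}|f-\bar f|^2 N(ds,du)$ further requires care, being converted via Lemma~\ref{sec2-lem2.1} to a $v(du)ds$ integral and exploiting the finiteness of $\int_{\mathbb{U}_0}\|u\|_{\mathbb{U}}^2 v(du)$.
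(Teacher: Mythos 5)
Your overall architecture matches the paper's: Itô's formula for $|X^{\epsilon}_t-Y^{\epsilon}_t|^2$, dropping the bounded-variation term via Lemma~\ref{sec2-lem2.3}, BDG and Lemma~\ref{sec2-lem2.1} for the martingale parts with absorption of $\tfrac13\mathbb{E}\sup|Z^\epsilon|^2$, and the split of each coefficient difference into a continuity piece (handled by Assumption~\ref{sec2-ass2.9} and concavity of $\kappa,\varphi$) plus an averaging piece, followed by Bihari's inequality. The gap is in your treatment of the averaging piece. First, Assumption~\ref{sec3-ass3.1} only controls time-averages over intervals anchored at the origin, $\frac{1}{T_1}\int_0^{T_1}|b(s,x,\mu)-\bar b(x,\mu)|^2ds$; it gives you nothing about $\int_{t_k}^{t_{k+1}}$ for $t_k>0$, so your claimed per-subinterval bound $\Delta\,\psi_1(\Delta)(1+|\hat Y^{\epsilon}_{t_k}|^2+\cdots)$ does not follow from the hypotheses as stated (the best you get by differencing is $t_{k+1}\psi_1(t_{k+1})-t_k\psi_1(t_k)$, which need not be small or even positive term-by-term). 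Second, your calibration is incoherent on the fixed horizon: you need $\Delta\to\infty$ to exploit $\psi_i(\Delta)\to0$, but the partition lives inside $[0,T]$, so ``$\Delta=\epsilon^{-1/2}$ truncated at $T$'' means $\Delta=T$ for all small $\epsilon$ — i.e.\ a single subinterval and no decay of $\psi_i$ at all. The Khasminskii scheme is the right instinct for the time-rescaled formulation of averaging, but equation \eqref{sec3-eq3.1} is not time-rescaled: the smallness here comes entirely from the explicit prefactors $\epsilon$ and $\sqrt{\epsilon}$ on $[0,T]$.

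The paper's proof sidesteps all of this. It freezes at the initial datum over the whole interval, writing
\[
|b(s,Y^{\epsilon}_s,\mathscr{L}_{Y^{\epsilon}_s})-\bar b(Y^{\epsilon}_s,\mathscr{L}_{Y^{\epsilon}_s})|^2
\le 3|b(s,Y^{\epsilon}_s,\cdot)-b(s,\xi,\mathscr{L}_{\xi})|^2
+3|b(s,\xi,\mathscr{L}_{\xi})-\bar b(\xi,\mathscr{L}_{\xi})|^2
+3|\bar b(\xi,\mathscr{L}_{\xi})-\bar b(Y^{\epsilon}_s,\cdot)|^2,
\]
applies Assumption~\ref{sec3-ass3.1} only to the middle term at the \emph{fixed} point $(\xi,\mathscr{L}_\xi)$ (which is exactly the $[0,t]$-anchored average the assumption provides), and controls the outer terms by the continuity moduli composed with $\mathbb{E}|Y^{\epsilon}_s-\xi|^2$, which is merely \emph{bounded}, not small. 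Every one of these contributions carries the prefactor $\epsilon$, so only the boundedness of $\psi_i$ and of $\mathbb{E}\sup_{s\le T}|Y^{\epsilon}_s|^2$ is needed to reduce the inhomogeneous term to $C\epsilon t$; Bihari then gives $u_\epsilon(t)\le\frac{1}{\beta+1}G^{-1}(G(C\epsilon t)+C\epsilon(L_1(T)+1)T)\to0$. If you replace your partition step by this single-interval freeze at $\xi$ (which is what your own truncation at $T$ degenerates to anyway), the rest of your argument goes through and coincides with the paper's.
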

\begin{proof}
It comes from ({\ref{sec3-eq3.1}}) and ({\ref{sec3-eq3.2}}), we have
\begin{align*}
X^{\epsilon}_t-Y^{\epsilon}_t=-\epsilon[K_t-\bar{K}_t]
&+\epsilon\int_0^t[b(s,X^\epsilon_s,\mathscr{L}_{X^\epsilon_s})-\bar{b}(Y^\epsilon_s,\mathscr{L}_{Y^\epsilon_s})]ds\\
&+\sqrt\epsilon\int_0^t[\sigma(s,X^\epsilon_s,\mathscr{L}_{X^\epsilon_s})-\bar{\sigma}(Y^\epsilon_s,\mathscr{L}_{Y^\epsilon_s})]dB_s\\
&+\sqrt\epsilon\int_{[0,t]\times U_0}
[f(s,X^\epsilon_{s-},\mathscr{L}_{X^\epsilon_s},u)-\bar{f}(Y^\epsilon_{s-},\mathscr{L}_{Y^\epsilon_s},u)]\widetilde{N}(ds,du).
\end{align*}
Using It\^{o} formula, we have
\begin{equation}\label{sec3-eq3.5}
\begin{split}
&\quad|X^{\epsilon}_t-Y^{\epsilon}_t|^2\\
&=-2\epsilon\int_0^t\langle X^\epsilon_s-Y^{\epsilon}_s,dK_s-d\bar{K}_s\rangle
+2\epsilon\int_0^t\langle X^\epsilon_s-Y^{\epsilon}_s,
b(s,X^\epsilon_s,\mathscr{L}_{X^\epsilon_s})-\bar{b}(Y^\epsilon_s,\mathscr{L}_{Y^\epsilon_s})\rangle ds\\
&\quad+2\sqrt\epsilon\int_0^t\langle X^\epsilon_s-Y^{\epsilon}_s,
(\sigma(s,X^\epsilon_s,\mathscr{L}_{X^\epsilon_s})-\bar{\sigma}(Y^\epsilon_s,\mathscr{L}_{Y^\epsilon_s}))dB_s\rangle\\
&\quad+\epsilon\int_0^t\|\sigma(s,X^\epsilon_s,\mathscr{L}_{X^\epsilon_s})-\bar{\sigma}(Y^\epsilon_s,\mathscr{L}_{Y^\epsilon_s})\|^2ds\\
&\quad+\epsilon\int_{[0,t]\times U_0}
|f(s,X^\epsilon_{s},\mathscr{L}_{X^\epsilon_s},u)-\bar{f}(Y^\epsilon_{s},\mathscr{L}_{Y^\epsilon_s},u)|^2v(du)ds\\
&\quad+\epsilon\int_{[0,t]\times U_0}
|f(s,X^\epsilon_{s-},\mathscr{L}_{X^\epsilon_s},u)-\bar{f}(Y^\epsilon_{s-},\mathscr{L}_{Y^\epsilon_s},u)|^2\widetilde{N}(ds,du)\\
&\quad+2\sqrt\epsilon\int_{[0,t]\times U_0}
\langle X^\epsilon_{s-}-Y^{\epsilon}_{s-},
f(s,X^\epsilon_{s-},\mathscr{L}_{X^\epsilon_s},u)-\bar{f}(Y^\epsilon_{s-},\mathscr{L}_{Y^\epsilon_s},u)\rangle\widetilde{N}(ds,du).
\end{split}
\end{equation}

\noindent{Then, according to Lemma \ref{sec2-lem2.3}, it is immediate to conclude that}
$$
\int_0^t\langle X^\epsilon_s-Y^{\epsilon}_s,dK_s-d\bar{K}_s\rangle\geq0.
$$

\noindent{Taking the expectation on both sides of \eqref{sec3-eq3.5}, it follows that for any $t\in[0,T]$,}
\begin{equation}\label{sec3-eq3.6}
\begin{split}
&\quad\mathbb{E}(\sup_{0\leq s\leq t}|X^{\epsilon}_s-Y^{\epsilon}_s|^2)\\
&\leq2\epsilon\mathbb{E}\int_0^t|X^{\epsilon}_s-Y^{\epsilon}_s|
|b(s,X^\epsilon_s,\mathscr{L}_{X^\epsilon_s})-\bar{b}(Y^\epsilon_s,\mathscr{L}_{Y^\epsilon_s})|ds\\
&\quad+\epsilon\mathbb{E}\int_0^t
\|\sigma(s,X^\epsilon_s,\mathscr{L}_{X^\epsilon_s})-\bar{\sigma}(Y^\epsilon_s,\mathscr{L}_{Y^\epsilon_s})\|^2ds\\
&\quad+\epsilon\mathbb{E}\int_{[0,t]\times U_0}
|f(s,X^\epsilon_{s},\mathscr{L}_{X^\epsilon_s},u)-\bar{f}(Y^\epsilon_{s},\mathscr{L}_{Y^\epsilon_s},u)|^2v(du)ds\\
&\quad+2\sqrt\epsilon\mathbb{E}\Big(\sup_{0\leq s\leq t}\int_0^s\langle X^\epsilon_r-Y^{\epsilon}_r,
(\sigma(r,X^\epsilon_r,\mathscr{L}_{X^\epsilon_r})-\bar{\sigma}(Y^\epsilon_r,\mathscr{L}_{Y^\epsilon_r}))dB_r\rangle\Big)\\
&\quad+\epsilon\mathbb{E}\Big(\sup_{0\leq s\leq t}\int_{[0,s]\times U_0}
|f(r,X^\epsilon_{r-},\mathscr{L}_{X^\epsilon_r},u)-\bar{f}(Y^\epsilon_{r-},\mathscr{L}_{Y^\epsilon_r},u)|^2\widetilde{N}(dr,du)\Big)\\
&\quad+2\sqrt\epsilon\mathbb{E}\Big(\sup_{0\leq s\leq t}\int_{[0,s]\times U_0}
\langle X^\epsilon_{r-}-Y^{\epsilon}_{r-},
f(r,X^\epsilon_{r-},\mathscr{L}_{X^\epsilon_r},u)-\bar{f}(Y^\epsilon_{r-},\mathscr{L}_{Y^\epsilon_r},u)\rangle\widetilde{N}(dr,du)\Big).
\end{split}
\end{equation}
By the basic inequality $2|a||b|\leq|a|^2+|b|^2$, we can obtain
\begin{equation}\label{sec3-eq3.7}
\begin{split}
&\quad2\epsilon\mathbb{E}\int_0^t|X^{\epsilon}_s-Y^{\epsilon}_s|
|b(s,X^\epsilon_s,\mathscr{L}_{X^\epsilon_s})-\bar{b}(Y^\epsilon_s,\mathscr{L}_{Y^\epsilon_s})|ds\\
&\leq\epsilon\mathbb{E}\int_0^t|X^{\epsilon}_s-Y^{\epsilon}_s|^2ds
+\epsilon\mathbb{E}\int_0^t|b(s,X^\epsilon_s,\mathscr{L}_{X^\epsilon_s})-\bar{b}(Y^\epsilon_s,\mathscr{L}_{Y^\epsilon_s})|^2ds.\\
\end{split}
\end{equation}
The Burkholder-Davis-Gundy's inequality implies that
\begin{equation}\label{sec3-eq3.8}
\begin{split}
&\quad2\sqrt\epsilon\mathbb{E}\Big(\sup_{0\leq s\leq t}\int_0^s\langle X^\epsilon_r-Y^{\epsilon}_r,
(\sigma(r,X^\epsilon_r,\mathscr{L}_{X^\epsilon_r})-\bar{\sigma}(Y^\epsilon_r,\mathscr{L}_{Y^\epsilon_r}))dB_r\rangle\Big)\\
&\leq C\sqrt\epsilon\mathbb{E}\Big[\int_0^t|\langle X^\epsilon_s-Y^{\epsilon}_s,
\sigma(s,X^\epsilon_s,\mathscr{L}_{X^\epsilon_s})-\bar{\sigma}(Y^\epsilon_s,\mathscr{L}_{Y^\epsilon_s})\rangle|^2ds\Big]^{\frac12}\\
&\leq C\sqrt\epsilon\mathbb{E}\Big[\int_0^t|X^\epsilon_s-Y^{\epsilon}_s|^2
\|\sigma(s,X^\epsilon_s,\mathscr{L}_{X^\epsilon_s})-\bar{\sigma}(Y^\epsilon_s,\mathscr{L}_{Y^\epsilon_s})\|^2ds\Big]^{\frac12}\\
&\leq C\sqrt\epsilon\mathbb{E}\Big[\sup_{0\leq s\leq t}|X^\epsilon_s-Y^{\epsilon}_s|^2\int_0^t
\|\sigma(s,X^\epsilon_s,\mathscr{L}_{X^\epsilon_s})-\bar{\sigma}(Y^\epsilon_s,\mathscr{L}_{Y^\epsilon_s})\|^2ds\Big]^{\frac12}\\
&\leq\frac13\mathbb{E}(\sup_{0\leq s\leq t}|X^\epsilon_s-Y^{\epsilon}_s|^2)
+C\epsilon\mathbb{E}\int_0^t\|\sigma(s,X^\epsilon_s,\mathscr{L}_{X^\epsilon_s})-\bar{\sigma}(Y^\epsilon_s,\mathscr{L}_{Y^\epsilon_s})\|^2ds.\\
\end{split}
\end{equation}
By Lemma \ref{sec2-lem2.1}, we have
\begin{equation}\label{sec3-eq3.9}
\begin{split}
&\quad\epsilon\mathbb{E}\Big(\sup_{0\leq s\leq t}\int_{[0,s]\times U_0}
|f(r,X^\epsilon_{r-},\mathscr{L}_{X^\epsilon_r},u)-\bar{f}(Y^\epsilon_{r-},\mathscr{L}_{Y^\epsilon_r},u)|^2\widetilde{N}(dr,du)\Big)\\
&\leq C\epsilon\mathbb{E}\Big\{\sum_{s\in D_{p(s)},s\leq t}
|f(s,X^\epsilon_{s-},\mathscr{L}_{X^\epsilon_{s}},p(s))
-\bar{f}(Y^\epsilon_{s-},\mathscr{L}_{Y^\epsilon_s},p(s))|^4\Big\}^{\frac12}\\
&\leq C\epsilon\mathbb{E}\sum_{s\in D_{p(s)},s\leq t}
|f(s,X^\epsilon_{s-},\mathscr{L}_{X^\epsilon_{s}},p(s))
-\bar{f}(Y^\epsilon_{s-},\mathscr{L}_{Y^\epsilon_s},p(s))|^2\\
&= C\epsilon\mathbb{E}\int_{[0,t]\times U_0}|f(s,X^\epsilon_{s},\mathscr{L}_{X^\epsilon_s},u)-\bar{f}(Y^\epsilon_{s},\mathscr{L}_{Y^\epsilon_s},u)|^2v(du)ds,
\end{split}
\end{equation}
and
\begin{equation}\label{sec3-eq3.10}
\begin{split}
&\quad2\sqrt\epsilon\mathbb{E}\Big(\sup_{0\leq s\leq t}\int_{[0,s]\times U_0}
\langle X^\epsilon_{r-}-Y^{\epsilon}_{r-},
f(r,X^\epsilon_{r-},\mathscr{L}_{X^\epsilon_r},u)-\bar{f}(Y^\epsilon_{r-},\mathscr{L}_{Y^\epsilon_r},u)\rangle\widetilde{N}(dr,du)\Big)\\
&\leq C\sqrt\epsilon\mathbb{E}\Big[\int_{[0,t]\times U_0}
|\langle X^\epsilon_{s-}-Y^{\epsilon}_{s-},
f(s,X^\epsilon_{s-},\mathscr{L}_{X^\epsilon_s},u)-\bar{f}(Y^\epsilon_{s-},\mathscr{L}_{Y^\epsilon_s},u)\rangle|^2 N(ds,du)\Big]^\frac12\\
&\leq C\sqrt\epsilon\mathbb{E}\Big[\sup_{0\leq s\leq t}|X^\epsilon_s-Y^{\epsilon}_s|^2\int_{[0,t]\times U_0}
|f(s,X^\epsilon_{s-},\mathscr{L}_{X^\epsilon_s},u)-\bar{f}(Y^\epsilon_{s-},\mathscr{L}_{Y^\epsilon_s},u)|^2 N(ds,du)\Big]^\frac12\\
&\leq\frac13\mathbb{E}(\sup_{0\leq s\leq t}|X^\epsilon_s-Y^{\epsilon}_s|^2)
+C\epsilon\mathbb{E}\int_{[0,t]\times U_0}
|f(s,X^\epsilon_{s},\mathscr{L}_{X^\epsilon_s},u)-\bar{f}(Y^\epsilon_{s},\mathscr{L}_{Y^\epsilon_s},u)|^2v(du)ds.\\
\end{split}
\end{equation}
Combing with \eqref{sec3-eq3.6}-\eqref{sec3-eq3.10}, we obtain
\begin{equation}\label{sec3-eq3.11}
\begin{split}
&\quad\mathbb{E}(\sup_{0\leq s\leq t}|X^{\epsilon}_s-Y^{\epsilon}_s|^2)\\
&\leq3\epsilon\mathbb{E}\int_0^t|X^{\epsilon}_s-Y^{\epsilon}_s|^2ds
+3\epsilon\mathbb{E}\int_0^t|b(s,X^\epsilon_s,\mathscr{L}_{X^\epsilon_s})-\bar{b}(Y^\epsilon_s,\mathscr{L}_{Y^\epsilon_s})|^2ds\\
&\quad+C\epsilon\mathbb{E}\int_0^t\|\sigma(s,X^\epsilon_s,\mathscr{L}_{X^\epsilon_s})-\bar{\sigma}(Y^\epsilon_s,\mathscr{L}_{Y^\epsilon_s})\|^2ds\\
&\quad+C\epsilon\mathbb{E}\int_{[0,t]\times U_0}
|f(s,X^\epsilon_{s},\mathscr{L}_{X^\epsilon_s},u)-\bar{f}(Y^\epsilon_{s},\mathscr{L}_{Y^\epsilon_s},u)|^2v(du)ds.\\
\end{split}
\end{equation}
By Assumption \ref{sec2-ass2.9} and $\int_{\mathbb{U}_0}\|u\|^2_{\mathbb{U}}v(du)<\infty$, we have
\begin{align*}
&\mathbb{E}(\sup_{0\leq s\leq t}|X^{\epsilon}_s-Y^{\epsilon}_s|^2)\\
&\leq3\epsilon\mathbb{E}\int_0^t|X^{\epsilon}_s-Y^{\epsilon}_s|^2ds
+6\epsilon\mathbb{E}\int_0^t|b(s,X^\epsilon_s,\mathscr{L}_{X^\epsilon_s})-b(s,Y^\epsilon_s,\mathscr{L}_{Y^\epsilon_s})|^2ds\\
&\quad+C\epsilon\mathbb{E}\int_0^t\|\sigma(s,X^\epsilon_s,\mathscr{L}_{X^\epsilon_s})-\sigma(s,Y^\epsilon_s,\mathscr{L}_{Y^\epsilon_s})\|^2ds\\
&\quad+C\epsilon\mathbb{E}\int_{[0,t]\times U_0}
|f(s,X^\epsilon_{s},\mathscr{L}_{X^\epsilon_s},u)-f(s,Y^\epsilon_{s},\mathscr{L}_{Y^\epsilon_s},u)|^2v(du)ds\\
\end{align*}
\begin{equation}\label{sec3-eq3.12}
\begin{split}
&\quad+6\epsilon\mathbb{E}\int_0^t|b(s,Y^\epsilon_s,\mathscr{L}_{Y^\epsilon_s})-\bar{b}(Y^\epsilon_s,\mathscr{L}_{Y^\epsilon_s})|^2ds
+C\epsilon\mathbb{E}\int_0^t\|\sigma(s,Y^\epsilon_s,\mathscr{L}_{Y^\epsilon_s})-\bar{\sigma}(Y^\epsilon_s,\mathscr{L}_{Y^\epsilon_s})\|^2ds\\
&\quad+C\epsilon\mathbb{E}\int_{[0,t]\times U_0}
|f(s,Y^\epsilon_{s},\mathscr{L}_{Y^\epsilon_s},u)-\bar{f}(Y^\epsilon_{s},\mathscr{L}_{Y^\epsilon_s},u)|^2v(du)ds\\
&\leq3\epsilon\int_0^t\mathbb{E}|X^{\epsilon}_s-Y^{\epsilon}_s|^2ds
+C\epsilon\int_0^tL_1(s)\Big[\kappa\Big((\beta+1)\mathbb{E}|X^{\epsilon}_s-Y^{\epsilon}_s|^2\Big)+\varphi\Big((\beta+1)\mathbb{E}|X^{\epsilon}_s-Y^{\epsilon}_s|^2\Big)\Big]ds\\
&\quad+6\epsilon\mathbb{E}\int_0^t|b(s,Y^\epsilon_s,\mathscr{L}_{Y^\epsilon_s})-\bar{b}(Y^\epsilon_s,\mathscr{L}_{Y^\epsilon_s})|^2ds
+C\epsilon\mathbb{E}\int_0^t\|\sigma(s,Y^\epsilon_s,\mathscr{L}_{Y^\epsilon_s})-\bar{\sigma}(Y^\epsilon_s,\mathscr{L}_{Y^\epsilon_s})\|^2ds\\
&\quad+C\epsilon\mathbb{E}\int_{[0,t]\times U_0}
|f(s,Y^\epsilon_{s},\mathscr{L}_{Y^\epsilon_s},u)-\bar{f}(Y^\epsilon_{s},\mathscr{L}_{Y^\epsilon_s},u)|^2v(du)ds.\\
\end{split}
\end{equation}
By Assumption \ref{sec2-ass2.9}, Assumption \ref{sec3-ass3.1} and Remark \ref{sec3-rem3.1}, we obtain
\begin{equation}\label{sec3-eq3.13}
\begin{split}
&\quad6\epsilon\mathbb{E}\int_0^t|b(s,Y^\epsilon_s,\mathscr{L}_{Y^\epsilon_s})-\bar{b}(Y^\epsilon_s,\mathscr{L}_{Y^\epsilon_s})|^2ds\\
&\leq18\epsilon\mathbb{E}\int_0^t|b(s,Y^\epsilon_s,\mathscr{L}_{Y^\epsilon_s})-b(s,\xi,\mathscr{L}_{\xi})|^2ds
+18\epsilon\mathbb{E}\int_0^t|b(s,\xi,\mathscr{L}_{\xi})-\bar{b}(\xi,\mathscr{L}_{\xi})|^2ds\\
&\quad+18\epsilon\mathbb{E}\int_0^t|\bar{b}(\xi,\mathscr{L}_{\xi})-\bar{b}(Y^\epsilon_s,\mathscr{L}_{Y^\epsilon_s})|^2ds\\
&\leq18\epsilon\mathbb{E}\int_0^t(L_1(s)+M)\kappa\Big(\beta|Y^\epsilon_s-\xi|^2+\rho^2(\mathscr{L}_{Y^\epsilon_s},\mathscr{L}_{\xi})\Big)ds
+18\epsilon\psi_1(t)\mathbb{E}(1+|\xi|^2+\|\mathscr{L}_{\xi}\|_2^2)t\\
&\leq18\epsilon\int_0^t(L_1(s)+M)\kappa\Big((\beta+1)\mathbb{E}|Y^\epsilon_s-\xi|^2\Big)ds
+18\epsilon\psi_1(t)(1+2\mathbb{E}|\xi|^2)t.\\
\end{split}
\end{equation}
By the similar deduction to that in \eqref{sec3-eq3.13}, we obtain that
\begin{equation}\label{sec3-eq3.14}
\begin{split}
&\quad C\epsilon\mathbb{E}
\int_0^t\|\sigma(s,Y^\epsilon_s,\mathscr{L}_{Y^\epsilon_s})-\bar{\sigma}(Y^\epsilon_s,\mathscr{L}_{Y^\epsilon_s})\|^2ds\\
&\leq C\epsilon\int_0^t(L_1(s)+M)\kappa\Big((\beta+1)\mathbb{E}|Y^\epsilon_s-\xi|^2\Big)ds
+C\epsilon\psi_2(t)(1+2\mathbb{E}|\xi|^2)t,\\
\end{split}
\end{equation}
and
\begin{equation}\label{sec3-eq3.15}
\begin{split}
&\quad C\epsilon\mathbb{E}\int_{[0,t]\times U_0}
|f(s,Y^\epsilon_{s},\mathscr{L}_{Y^\epsilon_s},u)-\bar{f}(Y^\epsilon_{s},\mathscr{L}_{Y^\epsilon_s},u)|^2v(du)ds\\
&\leq C\epsilon\int_0^t(L_1(s)+M)\varphi\Big((\beta+1)\mathbb{E}|Y^\epsilon_s-\xi|^2\Big)ds
+C\epsilon\psi_3(t)(1+2\mathbb{E}|\xi|^2)t.\\
\end{split}
\end{equation}
Using \eqref{sec3-eq3.12}-\eqref{sec3-eq3.15}, we can get that
\begin{align*}
&\mathbb{E}(\sup_{0\leq s\leq t}|X^{\epsilon}_s-Y^{\epsilon}_s|^2)\\
&\leq3\epsilon\int_0^t\mathbb{E}|X^{\epsilon}_s-Y^{\epsilon}_s|^2ds
+C\epsilon\int_0^tL_1(s)\Big[\kappa\Big((\beta+1)\mathbb{E}|X^{\epsilon}_s-Y^{\epsilon}_s|^2\Big)
+\varphi\Big((\beta+1)\mathbb{E}|X^{\epsilon}_s-Y^{\epsilon}_s|^2\Big)\Big]ds\\
\end{align*}
\begin{align*}
&+C\epsilon\int_0^t(L_1(s)+M)\Big[\kappa\Big((\beta+1)\mathbb{E}|Y^\epsilon_s-\xi|^2\Big)
+\varphi\Big((\beta+1)\mathbb{E}|Y^\epsilon_s-\xi|^2\Big)\Big]ds\\
&+C(\psi_1(t)+\psi_2(t)+\psi_3(t))(1+2\mathbb{E}|\xi|^2)\epsilon t.\\
\end{align*}
Let $\gamma(x)=x+\kappa(x)+\varphi(x)$, by Remark \ref{sec3-rem3.1} and the boundness of $\psi_i(t),i=1,2,3$, we can obtain
\begin{align*}
&\quad\mathbb{E}(\sup_{0\leq s\leq t}|X^{\epsilon}_s-Y^{\epsilon}_s|^2)\\
&\leq C\epsilon\int_0^t(L_1(s)+1)\gamma\Big((\beta+1)\mathbb{E}|X^{\epsilon}_s-Y^{\epsilon}_s|^2\Big)ds
+C\epsilon\int_0^t(L_1(s)+M)\gamma\Big((\beta+1)\mathbb{E}|Y^\epsilon_s-\xi|^2\Big)ds\\
&\quad+C(\psi_1(t)+\psi_2(t)+\psi_3(t))(1+2\mathbb{E}|\xi|^2)\epsilon t\\
&\leq C\epsilon\int_0^t(L_1(s)+1)\gamma
\Big((\beta+1)\mathbb{E}(\sup_{0\leq r\leq s}|X^{\epsilon}_r-Y^{\epsilon}_r|^2)\Big)ds
+C(\psi_1(t)+\psi_2(t)+\psi_3(t))(1+2\mathbb{E}|\xi|^2)\epsilon t\\
&\quad+C\epsilon\int_0^t(L_1(s)+M)\gamma
\Big(2(\beta+1)\mathbb{E}(\sup_{0\leq r\leq s}|Y^\epsilon_r|^2)+2(\beta+1)\mathbb{E}|\xi|^2\Big)ds\\
&\leq C\epsilon\int_0^t(L_1(s)+1)\gamma
\Big((\beta+1)\mathbb{E}(\sup_{0\leq r\leq s}|X^{\epsilon}_r-Y^{\epsilon}_r|^2)\Big)ds+C\epsilon t.\\
\end{align*}
Obviously, $\gamma(x)$ is nondecreasing function on $\mathbb{R}_+$ and $\gamma(0)=0$.
Setting $G(t)=\int_1^t\frac{ds}{\gamma(s)}$, it follows from Lemma \ref{sec2-lem2.13} that
$$
\mathbb{E}\Big(\sup_{0\leq s\leq t}|X^{\epsilon}_s-Y^{\epsilon}_s|^2\Big)\leq \frac1{\beta+1}G^{-1}\Big(G(C\epsilon t)+C\epsilon(L_1(T)+1)T\Big).
$$
Noting that $C\epsilon t\rightarrow 0$ as $\epsilon\rightarrow0$. Recalling the condition $\int_{0+}\frac{ds}{\gamma(s)}=\infty$, we can conclude that
$$
G(C\epsilon t)+C\epsilon(L_1(T)+1)T\rightarrow-\infty, \quad \epsilon\rightarrow0.
$$
On the other hand, because $G$ is a strictly increasing function, then we obtain that $G$ has an inverse function which is strictly increasing, and $G^{-1}(-\infty)=0$. That is
$$
G^{-1}\Big(G(C\epsilon t)+C\epsilon(L_1(T)+1)T\Big)\rightarrow0,\quad \epsilon\rightarrow0.
$$
Consequently, we have
$$\lim_{\epsilon\rightarrow0}\mathbb{E}(\sup_{0\leq s\leq t}|X^{\epsilon}_s-Y^{\epsilon}_s|^2)=0.$$
Therefore we complete the proof.
\end{proof}

\begin{remark}\label{sec3-rem3.2}
By the Chebyshev-Markov inequality and Theorem \ref{sec3-th3.2}, for any given number $\delta>0$, we have
$$\lim_{\epsilon\to0}\mathbb{P}(\sup_{0\leq{t}\leq{T}}|X^{\epsilon}_t-Y^{\epsilon}_t|>\delta)
\leq\frac1{\delta^2}\lim_{\epsilon\to0}\mathbb{E}(\sup_{0\leq{t}\leq{T}}|X^{\epsilon}_t-Y^{\epsilon}_t|^2)=0.$$
This implies the convergence in probability of the solutions $X^{\epsilon}_t$ to the averaged solution $Y^{\epsilon}_t$.
\end{remark}

\section{The generalized It\^{o} formula}\label{sec4}
In this subsection, we state and prove the It\^{o} formula.
\begin{theorem}\label{sec4-th4.1}
Suppose that Assumptions \ref{sec2-ass2.9} and \ref{sec2-ass2.11} hold. Then, if $h$ belongs to $C_b^{1,2,2}([0,T]\times\mathbb{R}^d\times\mathcal{M}_2(\mathbb{R}^d))$ and all the derivatives of the function $h$ in $(t,x,\mu)$ are uniformly continuous, it holds that for $t\geq 0$,
\begin{equation}\label{sec4-eq4.1}
\begin{split}
dh(t,X_t,\mathscr{L}_{X_t})
&=\partial_th(t,x,\mu)|_{x=X_t,\mu=\mathscr{L}_{X_t}}dt
+\mathbb{L}_{b,\sigma,f}h(t,X_t,\mathscr{L}_{X_t})dt
-\langle\partial_xh(t,X_t,\mathscr{L}_{X_t}),dK_t\rangle\\
&\quad+\langle\partial_xh(t,X_t,\mathscr{L}_{X_t}),\sigma(t,X_t,\mathscr{L}_{X_t})dB_t\rangle
-\mathbb{E}\langle\partial_{\mu }h(s,y,\mathscr{L}_{X_t})(X_t),dK_t\rangle|_{s=t,y=X_t}\\
&\quad+\int_{\mathbb{U}_0}[h(t,X_{t-}+f(t,X_{t-},\mathscr{L}_{X_t},u),\mathscr{L}_{X_t})-h(t,X_{t-},\mathscr{L}_{X_t})]\widetilde{N}(dt,du),
\end{split}
\end{equation}
where
\begin{align*}
&\mathbb{L}_{b,\sigma,f}h(t.x,\mu)\\
&:=\langle\partial_xh,b\rangle(t,x,\mu)
+\frac12tr((\sigma\sigma^*)\partial_x^2h)(t,x,\mu)+\int_{\mathbb{R}^d}\langle b(t,y,\mu),(\partial_{\mu}h)(t,x,\mu)(y)\rangle\mu(dy)\\
&\quad+\frac12\int_{\mathbb{R}^d}tr((\sigma\sigma^*)(t,y,\mu)\partial_y\partial_{\mu}h(t,x,\mu)(y))\mu(dy)\\
&\quad+\int_{\mathbb{U}_0}\int_0^1\int_{\mathbb{R}^d}
\Big\langle\partial_{\mu}h(t,x,\mu)(y+\eta f(t,y,\mu,u))-\partial_{\mu}h(t,x,\mu)(y),f(t,y,\mu,u)\Big\rangle\mu(dy)d\eta v(du)\\
&\quad+\int_{\mathbb{U}_0}[h(t,x+f(t,x,\mu,u),\mu)-h(t,x,\mu)-\langle f(t,x,\mu,u),\partial_xh(t,x,\mu)\rangle]v(du).
\end{align*}
\end{theorem}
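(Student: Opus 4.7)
The plan is to prove \eqref{sec4-eq4.1} via empirical-measure lifting, reducing it to the It\^{o} formula for distribution-free multi-valued SDEs with jumps on a product space. On an enlarged filtered probability space, take $N$ independent copies $(X^i,K^i)_{i=1}^N$ of $(X,K)$, each driven by an independent pair $(B^i,N^i)$ and started from i.i.d.\ copies of $\xi$; set $\mu^N_t := \tfrac{1}{N}\sum_{i=1}^N\delta_{X^i_t}$. Since $\rho$ metrizes convergence in $\mathcal{M}_2(\mathbb{R}^d)$ and Theorem~\ref{sec2-th2.12} gives $\sup_i\mathbb{E}\sup_{t\le T}|X^i_t|^2 < \infty$, the strong law of large numbers yields $\rho(\mu^N_t,\mathscr{L}_{X_t}) \to 0$ almost surely (and in $L^1$) for each $t\in[0,T]$.

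Introduce the lifted function
\[
F^N(t,x_1,\ldots,x_N) := h\Bigl(t,\,x_1,\,\tfrac{1}{N}\sum_{i=1}^N\delta_{x_i}\Bigr).
\]
The standard Lions-derivative/empirical-measure dictionary yields
\[
\partial_{x_1}F^N = \partial_x h + \tfrac{1}{N}\partial_\mu h(x_1), \qquad \partial_{x_j}F^N = \tfrac{1}{N}\partial_\mu h(x_j)\quad(j\ge 2),
\]
with analogous formulas for the second partials involving $\partial_x^2 h$ and the diagonal $\partial_y\partial_\mu h$, plus an $O(1/N^2)$ $\partial_\mu^2 h$ remainder; because the driving noises $(B^i,\widetilde N^i)_i$ are mutually independent, only the diagonal second-order terms survive. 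Apply the known It\^{o} formula for the $N$-fold distribution-free multi-valued SDE with jumps to $F^N(t,X^1_t,\ldots,X^N_t)$ and substitute the dictionary, separating the ``tracked'' index $i=1$ from $i\ge 2$. The $i=1$ contributions reproduce $\partial_t h$, $\langle\partial_x h,b\rangle$, $\tfrac12\mathrm{tr}(\sigma\sigma^*\partial_x^2 h)$, the Poisson compensator $\int_{\mathbb{U}_0}[h(x+f)-h-\langle f,\partial_x h\rangle]v(du)$, the Brownian martingale $\langle\partial_x h,\sigma dB^1\rangle$ and the $\widetilde N^1$-martingale in~\eqref{sec4-eq4.1}, and the Stieltjes term $-\langle\partial_x h,dK^1_t\rangle$, modulo $O(1/N)$ corrections from the $\tfrac{1}{N}\partial_\mu h(x_1)$ piece. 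Summing $\tfrac{1}{N}\sum_{i\ge 2}$ of the $i\ge 2$ drift and Brownian contributions recovers the $\partial_\mu h$- and $\partial_y\partial_\mu h$-integrals against $\mu^N_t$ in $\mathbb{L}_{b,\sigma,f}h$; the first-order Taylor expansion $F^N(\ldots,X^i+f,\ldots)-F^N(\ldots,X^i,\ldots) = \tfrac{1}{N}\int_0^1[\partial_\mu h(\mu^N_\eta)(X^i+\eta f)-\partial_\mu h(\mu^N_\eta)(X^i)]\cdot f\,d\eta + O(1/N^2)$ followed by Poisson compensation produces the $\int_0^1$-integral in $\mathbb{L}_{b,\sigma,f}h$; the Stieltjes sum $-\tfrac{1}{N}\sum_{i\ge 2}\int_0^t\langle\partial_\mu h(s,X^1_s,\mu^N_s)(X^i_s),dK^i_s\rangle$ supplies the $-\mathbb{E}\langle\partial_\mu h(\cdot)(X_t),dK_t\rangle$ term.

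Finally, pass to the limit $N\to\infty$. Using the uniform boundedness and uniform continuity of the derivatives of $h$, the $L^2$-bound on $\sup_t|X^i_t|$ from Theorem~\ref{sec2-th2.12}, and the $\rho$-convergence $\mu^N_t\to\mathscr{L}_{X_t}$, every empirical-measure integral converges to its $\mathscr{L}_{X_t}$-counterpart; the martingale contributions indexed by $i\ge 2$ have quadratic variations of order $1/N$ and vanish in $L^2$. The main obstacle is the convergence of the Stieltjes sum against the independent bounded-variation processes $K^i$: this is a law-of-large-numbers statement for Lebesgue--Stieltjes integrals, which requires the $L^1$-bound $\mathbb{E}|K^i|_0^T<\infty$ (available under Assumption~\ref{sec2-ass2.11} from the maximal-monotonicity estimates of C\'epa~\cite{Cepa1}) together with the uniform continuity of $(y,\mu)\mapsto\partial_\mu h(s,x_0,\mu)(y)$; once in hand, it gives the required convergence to $\int_0^t\mathbb{E}\langle\partial_\mu h(s,y,\mathscr{L}_{X_s})(X_s),dK_s\rangle|_{s=s,y=X^1_s}$, and the full identity~\eqref{sec4-eq4.1} follows in the limit.
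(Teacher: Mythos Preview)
Your proposal is essentially correct, but it takes a different route from the paper's proof. The paper does \emph{not} lift the full function $h(t,x_1,\tfrac{1}{N}\sum\delta_{x_i})$. Instead it freezes $(s,x)$, sets $H(\mu):=h(s,x,\mu)$, lifts only $H$ to $H^N(x^1,\dots,x^N)$, applies the classical It\^o formula to $H^N(X^1_t,\dots,X^N_t)$, and \emph{immediately takes expectation}. By symmetry the sum over particles collapses to $N$ times a single-particle term, all martingale integrals vanish, and the $N\to\infty$ limit is a deterministic dominated-convergence argument yielding a closed formula~\eqref{sec4-eq4.4} for $\partial_t H(\mu_t)$. (A separate truncation step removes the boundedness hypothesis on $b,\sigma,f$.) Only then does the paper apply the ordinary one-particle It\^o formula to $\bar h(t,x):=h(t,x,\mu_t)$, plugging in~\eqref{sec4-eq4.4} for the $t$-derivative through the measure slot.

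What each approach buys: the paper's separation into ``measure part'' (handled in expectation) and ``space part'' (classical It\^o) sidesteps exactly the obstacle you flag. Because the measure computation is done in expectation, the $dK$ contribution is a single term $\mathbb{E}\langle\partial_\mu H(\mu_s)(X_s),dK_s\rangle$ from the outset, so no law-of-large-numbers for Stieltjes integrals against the i.i.d.\ bounded-variation processes $K^i$ is ever needed, and no martingale-vanishing argument is required either. Your direct product-space lifting is conceptually cleaner and avoids the two-stage decomposition, but the price is precisely the LLN for $\tfrac{1}{N}\sum_{i\ge 2}\int_0^t\langle\partial_\mu h(\cdot)(X^i_s),dK^i_s\rangle$ that you identify as the main obstacle; that step is doable under $\mathbb{E}|K|_0^T<\infty$ and uniform continuity of $\partial_\mu h$, but it is genuinely additional work compared with the paper's route. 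Note also that the paper inserts an explicit truncation argument (cutting off $b,\sigma,f$ via a smooth $\chi_n$) to justify dominated convergence under the linear-growth Assumption~\ref{sec2-ass2.9}; your sketch implicitly relies on the same moment bounds but does not isolate this step.
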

\begin{proof}
Let $\mu_t:=\mathscr{L}_{X_t}$, and $h(t,X_t,\mathscr{L}_{X_t})=h(t,X_t,\mu_t)$. Define $\bar{h}(t,x):=h(t,x,\mu_t)$, and then $\bar{h}(t,X_t):=h(t,X_t,\mu_t)$. Moreover, according to $h\in C_b^{1,2,2}([0,T]\times\mathbb{R}^d\times\mathcal{M}_2(\mathbb{R}^d))$, it is immediate to conclude that $\bar{h}$ is $C^2$ in $x$. Our objective is to study the differentiability of $\bar{h}$ in $t$. Observe that the differentiability of $\bar{h}$ in $t$ comes from two parts-$h(t,x,\mu)$ in $t$ for fixed $x,\mu$ and $h(s,x,\mu_t)$ in $t$ for fixed $s,x$. Therefore, in order to apply the classical It\^{o} formula to $\bar{h}(t,X_t)$, we just need to investigate the second part.\\
{\bf Step one.} Suppose that $b,\sigma$ are bounded and  $|f(t,x,\mu,u)|\leq C\|u\|_{\mathbb{U}}$. We consider the differentiability of $h(s,x,\mu_t)$ in $t$ for fixed $s$, $x$.

We follow the method in Chassagneux,   Crisan and   Delarue \cite{J.-F.} to deal with it. For fixed $s$, $x$, we take $H(\mu_t):=h(s,x,\mu_t)$. For any positive integer $N$, let
\begin{equation}\label{sec4-eq4.2}
\begin{split}
x^1,x^2,\cdots,x^N\in\mathbb{R}^d,H^N(x^1,x^2,\cdots,x^N):=H(\frac1N\sum_{l=1}^N\delta_{x^l}),
\end{split}
\end{equation}
and then $H^N(x^1,x^2,\cdots,x^N)$ is a function on $\mathbb{R}^{d\times N}$. In addition, by (\cite{J.-F.}, Proposition 3.1), it holds that $H^N$ is $C^2$ on $\mathbb{R}^{d\times N}$ and
$$
\partial_{x^i}H^N(x^1,x^2,\cdots,x^N)=\frac1N\partial_\mu H\Big(\frac1N\sum_{l=1}^N\delta_{x^l}\Big)(x^i),
$$
\begin{equation}\label{sec4-eq4.3}
\begin{split}
\partial_{x^ix^j}^2H^N(x^1,x^2,\cdots,x^N)=&\frac1N\partial_y\partial_\mu H\Big(\frac1N\sum_{l=1}^N\delta_{x^l}\Big)(x^i)\delta_{i,j}+\frac1{N^2}\partial_\mu^2H\Big(\frac1N\sum_{l=1}^N\delta_{x^l}\Big)(x^i,x^j),
\end{split}
\end{equation}
where $i,j=1,2,\cdots,N$ and $\delta_{i,j}=1,i=j$, $\delta_{i,j}=0,i\neq j$. Moreover, we take $N$ independent copies $X_t^l$, $l=1,2,\cdots,N$ of $X_t$. That is
\begin{align*}
dX_t^l=&-dK_t^l+b(t,X_t^l,\mathscr{L}_{X_t^l})dt+\sigma(t,X_t^l,\mathscr{L}_{X_t^l})dB_t^l\\
&+\int_{\mathbb{U}_0}f(t,X_{t-}^l,\mathscr{L}_{X_t^l},u)\widetilde{N}^l(dt,du),\quad l=1,2,\cdots,N,
\end{align*}
where $B^l,N^l,l=1,2,\cdots,N$ are mutually independent and have the same distributions to that of $B,N$, respectively. Applying the It\^{o} formula to $H^N(X^1_t,X^2_t,\cdots,X^N_t)$ and taking the expectation on both sides, we derive that for $0\leq t<t+v\leq T$
\begin{align*}
&\quad\mathbb{E}H^N(X^1_{t+v},X^2_{t+v},\cdots,X^N_{t+v})\\
&=\mathbb{E}H^N(X^1_{t},X^2_{t},\cdots,X^N_{t})-\sum_{i=1}^N\mathbb{E}\int_t^{t+v}
\Big\langle\partial_{x^i}H^N(X^1_{s},X^2_{s},\cdots,X^N_{s}),dK_s^i\Big\rangle\\
&\quad+\sum_{i=1}^N\int_t^{t+v}\mathbb{E}\partial_{x^i}H^N(X^1_{s},X^2_{s},\cdots,X^N_{s})b(s,X_s^i,\mathscr{L}_{X_s^i})ds\\
&\quad+\frac12\sum_{i=1}^N\int_t^{t+v}\mathbb{E}
tr\Big((\sigma\sigma^*)(s,X_s^i,\mathscr{L}_{X_s^i})\partial_{x^ix^i}^2H^N(X^1_{s},X^2_{s},\cdots,X^N_{s})\Big)ds\\
&\quad+\int_{[t,t+v]\times\mathbb{U}_0}\mathbb{E}\Big[H^N\Big(X^1_{s}+f(s,X_s^1,\mathscr{L}_{X_s^1},u),X^2_{s},\cdots,X^N_{s}\Big)
-H^N(X^1_{s},X^2_{s},\cdots,X^N_{s})\\
&\qquad-\partial_{x^1}H^N(X^1_{s},X^2_{s},\cdots,X^N_{s})f(s,X_s^1,\mathscr{L}_{X_s^1},u)\Big]v(du)ds+\cdots\\
&\quad+\int_{[t,t+v]\times\mathbb{U}_0}\mathbb{E}\Big[H^N\Big(X^1_{s},X^2_{s},\cdots,X^N_{s}+f(s,X_s^N,\mathscr{L}_{X_s^N},u)\Big)
-H^N(X^1_{s},X^2_{s},\cdots,X^N_{s})\\
&\qquad-\partial_{x^N}H^N(X^1_{s},X^2_{s},\cdots,X^N_{s})f(s,X_s^N,\mathscr{L}_{X_s^N},u)\Big]v(du)ds\\
&=\mathbb{E}H^N(X^1_{t},X^2_{t},\cdots,X^N_{t})-N\mathbb{E}\int_t^{t+v}
\Big\langle\partial_{x^1}H^N(X^1_{s},X^2_{s},\cdots,X^N_{s}),dK_s^1\Big\rangle\\
&\quad+N\int_t^{t+v}\mathbb{E}\partial_{x^1}H^N(X^1_{s},X^2_{s},\cdots,X^N_{s})b(s,X_s^1,\mathscr{L}_{X_s^1})ds\\
&\quad+\frac{N}{2}\int_t^{t+v}\mathbb{E}
tr\Big((\sigma\sigma^*)(s,X_s^1,\mathscr{L}_{X_s^1})\partial_{x^1x^1}^2H^N(X^1_{s},X^2_{s},\cdots,X^N_{s})\Big)ds\\
&\quad+N\int_t^{t+v}\int_{\mathbb{U}_0}\int_0^1\mathbb{E}\Big[\Big(\partial_{x^1}H^N(X^1_{s}+\eta f(s,X_s^1,\mathscr{L}_{X_s^1},u),X^2_{s},\cdots,X^N_{s})\\
&\qquad\qquad\qquad\qquad\qquad-\partial_{x^1}H^N(X^1_{s},X^2_{s},\cdots,X^N_{s})\Big)f(s,X_s^1,\mathscr{L}_{X_s^1},u)\Big]d\eta v(du)ds,
\end{align*}
where the property of the same distributions for $X_t^l$, $l=1,2,\cdots,N$ is used in the second equality. Inserting \eqref{sec4-eq4.2} and \eqref{sec4-eq4.3} in the above equality, we obtain

\begin{align*}
&\quad\mathbb{E}H\Big(\frac1N\sum_{l=1}^N\delta_{X^l_{t+v}}\Big)\\
&=\mathbb{E}H\Big(\frac1N\sum_{l=1}^N\delta_{X^l_{t}}\Big)
-\mathbb{E}\int_t^{t+v}\Big\langle\partial_{\mu }
H\Big(\frac1N\sum_{l=1}^N\delta_{X^l_{s}}\Big)(X^1_{s}),dK_s^1\Big\rangle\\
&\quad+\int_t^{t+v}\mathbb{E}\partial_{\mu }
H\Big(\frac1N\sum_{l=1}^N\delta_{X^l_{s}}\Big)(X^1_{s})\times b(s,X^1_{s},\mathscr{L}_{X^1_{s}})ds\\
&\quad+\frac12\int_t^{t+v}\mathbb{E}tr\Big((\sigma\sigma^*)(s,X_s^1,\mathscr{L}_{X_s^1})
\partial_y\partial_{\mu } H\Big(\frac1N\sum_{l=1}^N\delta_{X^l_{s}}\Big)(X^1_{s})\Big)ds\\
&\quad+\frac1{2N}\int_t^{t+v}\mathbb{E}tr\Big((\sigma\sigma^*)(s,X_s^1,\mathscr{L}_{X_s^1})
\partial_{\mu }^2 H\Big(\frac1N\sum_{l=1}^N\delta_{X^l_{s}}\Big)(X^1_{s},X^1_{s})\Big)ds\\
&\quad+\int_t^{t+v}\int_{\mathbb{U}_0}\int_0^1\mathbb{E}\Big[\Big(\partial_{\mu}H\Big(\frac1N\delta_{X^1_{s}+\eta f(s,X_s^1,\mathscr{L}_{X_s^1},u)}+\frac1N\sum_{l=2}^N\delta_{X^l_{s}}\Big)\\
&\qquad\qquad\circ(X^1_{s}+\eta f(s,X_s^1,\mathscr{L}_{X_s^1},u))\\
&\qquad-\partial_{\mu}H\Big(\frac1N\sum_{l=1}^N
\delta_{X^l_{s}}\Big)(X^1_{s})\Big)f(s,X_s^1,\mathscr{L}_{X_s^1},u)\Big]d\eta v(du)ds.\\
\end{align*}
Next, we take the limit on both sides of the above equality. By (\cite{J. Horowitz}, Section 5), it holds that

$$
\lim_{N\rightarrow\infty}\mathbb{E}\Big[\sup_{0\leq t\leq T}\rho^2\Big(\frac1N\sum_{l=1}^N
\delta_{X^l_{t}},\mu_t\Big)\Big]=0,
$$
Letting $N\rightarrow\infty$, by continuity and boundedness of $H$, $\partial_{\mu}H$, $\partial_y\partial_{\mu}H$, and boundedness of $\partial_{\mu}^2H$, $b$, $\sigma$, it follows from the dominated convergence theorem that
\begin{align*}
H(\mu_{t+v})=&H(\mu_{t})-\mathbb{E}\int_t^{t+v}\langle\partial_{\mu }
H(\mu_{s})(X^1_{s}),dK_s^1\rangle\\
&+\int_t^{t+v}\mathbb{E}\partial_{\mu }
H(\mu_{s})(X^1_{s})b(s,X^1_{s},\mathscr{L}_{X^1_{s}})ds\\
&+\frac12\int_t^{t+v}\mathbb{E}tr\Big((\sigma\sigma^*)(s,X_s^1,\mathscr{L}_{X_s^1})
\partial_y\partial_{\mu } H(\mu_{s})(X^1_{s})\Big)ds\\
&+\int_t^{t+v}\int_{\mathbb{U}_0}\int_0^1\mathbb{E}\Big[\Big(\partial_{\mu}H(\mu_{s})
(X^1_{s}+\eta f(s,X_s^1,\mathscr{L}_{X_s^1},u))\\
&\qquad\qquad\qquad-\partial_{\mu}H(\mu_{s})(X^1_{s})\Big)f(s,X_s^1,\mathscr{L}_{X_s^1},u)\Big]d\eta v(du)ds.\\
\end{align*}
\noindent{Consequently, we arrive at}

\begin{equation}\label{sec4-eq4.4}
\begin{split}
\partial_t&H(\mu_{t})=
-\mathbb{E}\langle\partial_{\mu }H(\mu_{t})(X_t),dK_t\rangle+\int_{\mathbb{R}^d}\langle b(t,y,\mu_{t}),\partial_{\mu}H(\mu_{t})(y)\rangle\mu_{t}(dy)\\
&+\frac12\int_{\mathbb{R}^d}tr\Big((\sigma\sigma^*)(t,y,\mu_{t})\partial_y\partial_{\mu}H(\mu_{t})(y)\Big)\mu_{t}(dy)\\
&+\int_{\mathbb{U}_0}\int_0^1\int_{\mathbb{R}^d}
\Big[\Big(\partial_{\mu}H(\mu_{t})(y+\eta f(t,y,\mu_{t},u))-\partial_{\mu}H(\mu_{t})(y)\Big)f(t,y,\mu_{t},u)\Big]\mu_{t}(dy)d\eta v(du).
\end{split}
\end{equation}

\noindent{{\bf Step two.} Suppose that Assumption \ref{sec2-ass2.9} hold. We focus on dealing with the differentiability of $h(s,x,\mu_t)$ in $t$.}

\noindent{To begin with, we choose a smooth function $\chi_n:\mathbb{R}^d\rightarrow\mathbb{R}^d$ satisfying $\chi_n(x)=x,|x|\leq n$ and $\chi_n(x)=0,|x|>2n$ such that for $x\in\mathbb{R}^d$}
\begin{equation}\label{sec4-eq4.5}
\begin{split}
|\chi_n(x)|\leq C_1,\quad \|\partial\chi_n(x)\|\leq C_1,
\end{split}
\end{equation}
where the positive constant $C_1$ is independent of $n$. Set
$$
b^{(n)}(t,x,\mu):=b(t,\chi_n(x),\mu),$$
$$ \sigma^{(n)}(t,x,\mu):=\sigma(t,\chi_n(x),\mu),
$$
$$
f^{(n)}(t,x,\mu,u):=f(t,\chi_n(x),\mu,u),
$$
Letting $n\rightarrow\infty$, we can get
$$
b^{(n)}(t,x,\mu)\rightarrow b(t,x,\mu),\quad \sigma^{(n)}(t,x,\mu)\rightarrow\sigma(t,x,\mu),\quad
f^{(n)}(t,x,\mu,u)\rightarrow f(t,x,\mu,u),
$$
as .

\noindent{Besides, by Assumption \ref{sec2-ass2.9}, we deduce that $b^{(n)},\sigma^{(n)}$ are bounded,} $$|f^{(n)}(t,x,\mu,u)|\leq C\|u\|_{\mathbb{U}}~\hbox{for}~ (t,x,\mu,u)\in[0,T]\times\mathbb{R}^d\times\mathcal{M}_2(\mathbb{R}^d)\times\mathbb{U}_0,$$ and $b^{(n)},\sigma^{(n)},f^{(n)}$ satisfy Assumption \ref{sec2-ass2.9}. Then according to \ref{sec2-th2.12}, the following equation:
\begin{align*}
dX_t^{(n)}\in&-\mathcal{A}(X_t^{(n)})dt+b^{(n)}(t,X_t^{(n)},\mathscr{L}_{X_t^{(n)}})dt
+\sigma^{(n)}(t,X_t^{(n)},\mathscr{L}_{X_t^{(n)}})dB_t\\
&+\int_{U_0}f^{(n)}(t,X_{t-}^{(n)},\mathscr{L}_{X_t^{(n)}},u)\widetilde{N}(dt,du),\quad t\in[0,T]
\end{align*}
has a unique solution $(X_\cdot^{(n)},K_\cdot^{(n)})\in\mathscr{A}$, $\mu_t^{(n)}:=\mathscr{L}_{X_t^{(n)}}$ for $X_0^{(n)}=X_0=\xi$. Thus taking account of {\bf Step one}, it holds that for $0\leq t<t+v\leq T$
\begin{equation}\label{sec4-eq4.6}
\begin{split}
H(\mu_{t+v}^{(n)})-H(\mu_{t}^{(n)})=
&-\int_t^{t+v}\mathbb{E}\langle\partial_{\mu }H(\mu_{r}^{(n)})(X_r^n),dK_r^{(n)}\rangle\\
&+\int_t^{t+v}\int_{\mathbb{R}^d}\Big\langle b^{(n)}(r,y,\mu_{r}^{(n)}),\partial_{\mu}H(\mu_{r}^{(n)})(y)\Big\rangle\mu_{r}^{(n)}(dy)dr\\
&+\frac12\int_t^{t+v}\int_{\mathbb{R}^d}tr\Big((\sigma^{(n)}\sigma^{(n)*})(r,y,\mu_{r}^{(n)})
\partial_y\partial_{\mu}H(\mu_{r}^{(n)})(y)\Big)\mu_{r}^{(n)}(dy)dr\\
&+\int_t^{t+v}\int_{\mathbb{U}_0}\int_0^1\int_{\mathbb{R}^d}
\Big[\Big(\partial_{\mu}H(\mu_{r}^{(n)})(y+\eta f^{(n)}(r,y,\mu_{r}^{(n)},u))\\
&\qquad\qquad\qquad-\partial_{\mu}H(\mu_{r}^{(n)})(y)\Big)f^{(n)}(r,y,\mu_{r}^{(n)},u)\Big]\mu_{r}^{(n)}(dy)d\eta v(du)dr.
\end{split}
\end{equation}

Next, we observe the limit of $\mu_{t}^{(n)}$ as $n\rightarrow\infty$ for any $t\in[0,T]$. Taking the expectation on two sides, the It\^{o} formula implies
\begin{align*}
&\quad\mathbb{E}|X_t^{(n)}-X_t|^2\\
&=-2\mathbb{E}\int_0^t\langle X_r^{(n)}-X_r,dK_r^{(n)}-dK_r\rangle\\
&\quad+2\mathbb{E}\int_0^t\langle b^{(n)}(r,X_r^{(n)},\mu_{r}^{(n)})-b(r,X_r,\mu_{r}),X_r^{(n)}-X_r\rangle dr\\
&\quad+\mathbb{E}\int_0^t\|\sigma^{(n)}(r,X_r^{(n)},\mu_{r}^{(n)})-\sigma(r,X_r,\mu_{r})\|^2dr\\
&\quad+\mathbb{E}\int_0^t\int_{\mathbb{U}_0}
|f^{(n)}(r,X_r^{(n)},\mu_{r}^{(n)},u)-f(r,X_r,\mu_{r},u)|^2v(du)dr\\
&\leq\mathbb{E}\int_0^t|X_r^{(n)}-X_r|^2dr
+\mathbb{E}\int_0^t|b^{(n)}(r,X_r^{(n)},\mu_{r}^{(n)})-b(r,X_r,\mu_{r})|^2dr\\
&\quad+\mathbb{E}\int_0^t\|\sigma^{(n)}(r,X_r^{(n)},\mu_{r}^{(n)})-\sigma(r,X_r,\mu_{r})\|^2dr\\
&\quad+\mathbb{E}\int_0^t\int_{\mathbb{U}_0}
|f^{(n)}(r,X_r^{(n)},\mu_{r}^{(n)},u)-f(r,X_r,\mu_{r},u)|^2v(du)dr.\\
\end{align*}
By Assumption \ref{sec2-ass2.9}, we have
\begin{align*}
&\quad\mathbb{E}|X_t^{(n)}-X_t|^2\\
&\leq\mathbb{E}\int_0^t|X_r^{(n)}-X_r|^2dr
+\mathbb{E}\int_0^tL_1(r)\kappa\Big(\beta|\chi_n(X_r^{(n)})-X_r|^2+\rho^2(\mu_r^{(n)},\mu_r)\Big)dr\\
&\quad+C\mathbb{E}\int_0^tL_1(r)\varphi\Big(\beta|\chi_n(X_r^{(n)})-X_r|^2+\rho^2(\mu_r^{(n)},\mu_r)\Big)dr\\
&\leq\int_0^t\mathbb{E}|X_r^{(n)}-X_r|^2dr
+\int_0^tL_1(r)\kappa\Big(\beta\mathbb{E}|\chi_n(X_r^{(n)})-X_r|^2+\mathbb{E}|X_r^{(n)}-X_r|^2\Big)dr\\
&\quad+C\int_0^tL_1(r)\varphi\Big(\beta\mathbb{E}|\chi_n(X_r^{(n)})-X_r|^2+\mathbb{E}|X_r^{(n)}-X_r|^2\Big)dr.\\
\end{align*}
Letting $\gamma(x)=\kappa(x)+\varphi(x)+x$, we have
$$
\mathbb{E}|X_t^{(n)}-X_t|^2
\leq C\int_0^t(L_1(r)+1)\gamma\Big(\beta\mathbb{E}|\chi_n(X_r^{(n)})-X_r|^2+\mathbb{E}|X_r^{(n)}-X_r|^2\Big)dr.\\
$$
By \eqref{sec4-eq4.5}, we have
\begin{align*}
\mathbb{E}|\chi_n(X_r^{(n)})-X_r|^2
&\leq2\mathbb{E}|\chi_n(X_r^{(n)})-\chi_n(X_r)|^2+2\mathbb{E}|\chi_n(X_r)-X_r|^2\\
&\leq 2C_1\mathbb{E}|X_r^{(n)}-X_r|^2+2\mathbb{E}|\chi_n(X_r)-X_r|^2,
\end{align*}
and furthermore
\begin{align*}
\mathbb{E}|X_t^{(n)}-X_t|^2\leq C\int_0^t(L_1(r)+1)\gamma\Big((2\beta C_1+1)\mathbb{E}|X_r^{(n)}-X_r|^2
+2\mathbb{E}|\chi_n(X_r)-X_r|^2\Big)dr.
\end{align*}
Therefore
\begin{align*}
&\qquad(2\beta C_1+1)\mathbb{E}|X_t^{(n)}-X_t|^2+2\mathbb{E}|\chi_n(X_t)-X_t|^2\\
&\leq 2\mathbb{E}|\chi_n(X_t)-X_t|^2+C\int_0^t(L_1(r)+1)\gamma\Big((2\beta C_1+1)\mathbb{E}|X_r^{(n)}-X_r|^2
+2\mathbb{E}|\chi_n(X_r)-X_r|^2\Big)dr.
\end{align*}

\noindent{Setting $G(t)=\int_{1}^t\frac{ds}{\gamma(s)}$, the Bihari's inequality admits us to have that}
$$
(2\beta C_1+1)\mathbb{E}|X_t^{(n)}-X_t|^2+2\mathbb{E}|\chi_n(X_t)-X_t|^2
\leq G^{-1}\Big[G\Big(2\mathbb{E}|\chi_n(X_t)-X_t|^2\Big)+C(L_1(T)+1)T\Big].
$$
The fact that $\lim_{n\rightarrow\infty}\chi_n(x)=x$ for $x\in\mathbb{R}^d$ yields that
$\lim_{n\rightarrow\infty}\mathbb{E}|\chi_n(X_t)-X_t|^2=0.$
Recalling the condition $\int_{0+}\frac{ds}{\gamma(s)}=\infty$, we can conclude that
$$
G\Big(2\mathbb{E}|\chi_n(X_t)-X_t|^2\Big)+C(L_1(T)+1)T\rightarrow-\infty, \quad n\rightarrow\infty.
$$
On the other hand, because $G$ is a strictly increasing function, then we obtain that $G$ has an inverse function which is strictly increasing, and $G^{-1}(-\infty)=0$. That is
$$
G^{-1}\Big[G\Big(2\mathbb{E}|\chi_n(X_t)-X_t|^2\Big)+C(L_1(T)+1)T\Big]\rightarrow0, \quad n\rightarrow\infty.
$$
Consequently, we have
$$
\lim_{n\rightarrow\infty}\mathbb{E}|X_t^{(n)}-X_t|^2=0.
$$
Furthermore, we can obtain
$$
\lim_{n\rightarrow\infty}\mathbb{E}|\chi_n(X_t^{(n)})-X_t|^2=0.
$$
So, we get that
$$
\lim_{n\rightarrow\infty}\rho^2(\mu_t^{(n)},\mu_t)
\leq\lim_{n\rightarrow\infty}\mathbb{E}|X_t^{(n)}-X_t|^2=0.
$$
Finally, note that for any $t\in[0,T]$
\begin{align*}
K_t^{(n)}-K_t=X_t-X_t^{(n)}
&+\int_0^t[b^{(n)}(s,X_s^{(n)},\mathscr{L}_{X_s^{(n)}})-b(s,X_s,\mathscr{L}_{X_s})]ds\\
&+\int_0^t[\sigma^{(n)}(s,X_s^{(n)},\mathscr{L}_{X_s^{(n)}})-\sigma(s,X_s,\mathscr{L}_{X_s})]dB_s\\
&+\int_{[0,t]\times U_0}[f^{(n)}(s,X_{s-}^{(n)},\mathscr{L}_{X_s^{(n)}},u)-f(s,X_{s-},\mathscr{L}_{X_s},u)]\widetilde{N}(ds,du).
\end{align*}
Hence, by H\"{o}lder inequality and It\^{o}'s isometry, we have
\begin{align*}
&\quad\mathbb{E}|K_t^{(n)}-K_t|^2\\
&\leq4\mathbb{E}|X_t^{(n)}-X_t|^2
+4\mathbb{E}\Big|\int_0^t[b^{(n)}(s,X_s^{(n)},\mathscr{L}_{X_s^{(n)}})-b(s,X_s,\mathscr{L}_{X_s})]ds\Big|^2\\
&\quad+4\mathbb{E}\Big|\int_0^t[\sigma^{(n)}(s,X_s^{(n)},\mathscr{L}_{X_s^{(n)}})-\sigma(s,X_s,\mathscr{L}_{X_s})]dB_s\Big|^2\\
\end{align*}
\begin{equation}\label{sec4-eq4.7}
\begin{split}
&\quad+4\mathbb{E}\Big|\int_{[0,t]\times U_0}[f^{(n)}(s,X_{s-}^{(n)},\mathscr{L}_{X_s^{(n)}},u)-f(s,X_{s-},\mathscr{L}_{X_s},u)]\widetilde{N}(ds,du)\Big|^2\\
&\leq4\mathbb{E}|X_t^{(n)}-X_t|^2
+4t\mathbb{E}\int_0^t|b^{(n)}(s,X_s^{(n)},\mathscr{L}_{X_s^{(n)}})-b(s,X_s,\mathscr{L}_{X_s})|^2ds\\
&\quad+4\mathbb{E}\int_0^t|\sigma^{(n)}(s,X_s^{(n)},\mathscr{L}_{X_s^{(n)}})-\sigma(s,X_s,\mathscr{L}_{X_s})|^2ds\\
&\quad+4\mathbb{E}\int_{[0,t]\times U_0}|f^{(n)}(s,X_{s}^{(n)},\mathscr{L}_{X_s^{(n)}},u)-f(s,X_{s},\mathscr{L}_{X_s},u)|^2v(du)ds.
\end{split}
\end{equation}
By Assumption \ref{sec2-ass2.9}, we derive that
\begin{align*}
&\quad\mathbb{E}|K_t^{(n)}-K_t|^2\\
&\leq4\mathbb{E}|X_t^{(n)}-X_t|^2
+4(T+1)\mathbb{E}\int_0^tL_1(s)
\kappa\Big(\beta|\chi_{n}(X_{s}^{(n)})-X_{s}|^2+\rho^2(\mathscr{L}_{X_s^{(n)}},\mathscr{L}_{X_s})\Big)ds\\
&\quad+C\mathbb{E}\int_0^tL_1(s)
\varphi\Big(\beta|\chi_{n}(X_{s}^{(n)})-X_{s}|^2+\rho^2(\mathscr{L}_{X_s^{(n)}},\mathscr{L}_{X_s})\Big)ds\\
&\leq4\mathbb{E}|X_t^{(n)}-X_t|^2
+C(T+1)\int_0^tL_1(s)\gamma\Big(\beta\mathbb{E}|\chi_{n}(X_s^{(n)})-X_s|^2+\mathbb{E}|X_s^{(n)}-X_s|^2\Big)ds.\\
\end{align*}
Let $n\rightarrow\infty$, by dominated convergence theorem, we have
$$
\lim_{n\rightarrow\infty}|K_t^{(n)}-K_t|^2=0.
$$
Taking the limit on both sides of \eqref{sec4-eq4.6}, by dominated convergence theorem, one can still obtain \eqref{sec4-eq4.4}.

\noindent{{\bf Step three.} We prove \eqref{sec4-eq4.1} holds.}

\noindent{By {\bf Step two}, we know that $\bar{h}(t,x)$ is $C^1$ in $t$ and $C^2$ in $x$. Applying the classical It\^{o} formula, we obtain that}
\begin{align*}
&\quad dh(t,X_t,\mu_t)=d\bar{h}(t,X_t)\\
&=\partial_t\bar{h}(t,X_t)dt-\langle\partial_x\bar{h}(t,X_t),dK_t\rangle
+\langle\partial_x\bar{h}(t,X_t),b(t,X_t,\mu_t)\rangle dt\\
&\quad+\langle\partial_x\bar{h}(t,X_t),\sigma(t,X_t,\mu_t)dB_t\rangle
+\frac12tr\big((\sigma\sigma^*)(t,X_t,\mu_t)\partial_x^2\bar{h}(t,X_t)\big)dt\\
&\quad+\int_{\mathbb{U}_0}[\bar{h}(t,X_{t}+f(t,X_{t},\mu_t,u))-\bar{h}(t,X_t)-\langle f(t,X_t,\mu_t,u),\partial_x\bar{h}(t,X_t)\rangle]v(du)dt\\
&\quad+\int_{\mathbb{U}_0}[\bar{h}(t,X_{t-}+f(t,X_{t-},\mu_t,u))-\bar{h}(t,X_{t-})]\widetilde{N}(dt,du)\\
&=\partial_th(t,x,\mu)|_{x=X_t,\mu=\mu_t}dt+\partial_th(s,x,\mu_t)|_{s=t,x=X_t}dt-\langle\partial_xh(t,X_t,\mu_t),dK_t\rangle\\
&\quad+\langle\partial_xh(t,X_t,\mu_t),b(t,X_t,\mu_t)\rangle dt
+\langle\partial_xh(t,X_t,\mu_t),\sigma(t,X_t,\mu_t)dB_t\rangle\\
&\quad+\frac12tr\big((\sigma\sigma^*)(t,X_t,\mu_t)\partial_x^2h(t,X_t,\mu_t)\big)dt\\
&\quad+\int_{\mathbb{U}_0}[h(t,X_{t-}+f(t,X_{t-},\mu_t,u),\mu_t)-h(t,X_{t-},\mu_t)]\widetilde{N}(dt,du)\\
&\quad+\int_{\mathbb{U}_0}[h(t,X_{t}+f(t,X_{t},\mu_t,u),\mu_t)-h(t,X_t,\mu_t)-\langle f(t,X_t,\mu_t,u),\partial_xh(t,X_t,\mu_t)\rangle]v(du)dt.
\end{align*}
Inserting \eqref{sec4-eq4.4} in the above equation, we have
\begin{equation}\label{sec4-eq4.9}
\begin{split}
&\qquad dh(t,X_t,\mu_t)\\
&=\partial_th(t,x,\mu)|_{x=X_t,\mu=\mu_t}dt-\langle\partial_xh(t,X_t,\mu_t),dK_t\rangle\\
&\quad+\langle\partial_xh(t,X_t,\mu_t),b(t,X_t,\mu_t)\rangle dt
+\langle\partial_xh(t,X_t,\mu_t),\sigma(t,X_t,\mu_t)dB_t\rangle\\
&\quad+\int_{\mathbb{U}_0}[h(t,X_{t-}+f(t,X_{t-},\mu_t,u),\mu_t)-h(t,X_{t-},\mu_t)]\widetilde{N}(dt,du)\\
&\quad+\frac12tr\big((\sigma\sigma^*)(t,X_t,\mu_t)\partial_x^2h(t,X_t,\mu_t)\big)dt
-\mathbb{E}\langle\partial_{\mu }h(s,y,\mu_{t})(X_t),dK_t\rangle|_{s=t,y=X_t}\\
&\quad+\int_{\mathbb{R}^d}\langle b(t,y,\mu_{t}),\partial_{\mu}h(t,X_t,\mu_{t})(y)\rangle\mu_{t}(dy)dt\\
&\quad+\frac12\int_{\mathbb{R}^d}tr\Big((\sigma\sigma^*)(t,y,\mu_{t})\partial_y\partial_{\mu}h(t,X_t,\mu_{t})(y)\Big)\mu_{t}(dy)dt\\
&\quad+\int_{\mathbb{U}_0}\int_0^1\int_{\mathbb{R}^d}
\Big\langle\partial_{\mu}h(t,X_t,\mu_{t})(y+\eta f(t,y,\mu_{t},u))\\
&\qquad\qquad\qquad\qquad-\partial_{\mu}h(t,X_t,\mu_{t})(y),f(t,y,\mu_{t},u)\Big\rangle\mu_{t}(dy)d\eta v(du)dt\\
&\quad+\int_{\mathbb{U}_0}[h(t,X_{t}+f(t,X_{t},\mu_t,u),\mu_t)-h(t,X_t,\mu_t)-\langle f(t,X_t,\mu_t,u),\partial_xh(t,X_t,\mu_t)\rangle]v(du)dt.\\
\end{split}
\end{equation}
This completes the proof.
\end{proof}

\section{The stability of strong solutions}\label{sec5}
\subsection{The exponential stability of the second moment }\label{sec5.1}
In the subsection, we investigate the exponential stability of the second moment for the strong solution to Equation \eqref{sec1-eq1.1}. Let's first recall the concept of the exponentially stability of the second moment.
\begin{definition}\label{sec5-def5.1}
If there exist a pair of positive constants $\alpha$ and $C$ such that
$$
\mathbb{E}|X_t|^2\leq Ce^{-\alpha t}\mathbb{E}|\xi|^2,\quad t\geq0
$$
for $X_0=\xi$, then the solution $X_.$ of Equation \eqref{sec1-eq1.1} is called exponentially stable in mean square.
\end{definition}

\begin{assumption}\label{sec5-ass5.2}
There exists a function $V:[0,T]\times\mathbb{R}^d\times\mathcal{M}_2(\mathbb{R}^d)\rightarrow\mathbb{R}$ satisfying
\begin{itemize}
\item [(i)]
$V\in C_{b,+}^{1,2,2}([0,T]\times\mathbb{R}^d\times\mathcal{M}_2(\mathbb{R}^d));$
\item [(ii)]
$$\int_{\mathbb{R}^d}(\mathbb{L}_{b,\sigma,f}V(s,x,\mu)+\alpha V(s,x,\mu)+\partial_sV(s,x,\mu))\mu(dx)\leq0,$$
where $\alpha$ is a positive constant;
\item [(iii)]
$$a_1\int_{\mathbb{R}^d}|x|^2\mu(dx)\leq\int_{\mathbb{R}^d}V(s,x,\mu)\mu(dx)\leq a_2\int_{\mathbb{R}^d}|x|^2\mu(dx),$$
where $a_1,a_2$ are two positive constants.
\end{itemize}
\end{assumption}

\begin{theorem}\label{sec5-th5.3}
Assume that Assumptions \ref{sec2-ass2.9}, \ref{sec2-ass2.11} and \ref{sec5-ass5.2} hold, $\xi$ is a $\mathscr{F}_0$-measurable random variable with $\mathbb{E}|\xi|^2<\infty$.  If the strong solution $(X_.,K_.)$ and the Lyapunov function $V$ satisfy for any $t\geq0,$
\begin{equation}\label{sec5-eq5.1}
\begin{split}
\langle\partial_xV(t,X_t,\mathscr{L}_{X_t}),dK_t\rangle+\mathbb{E}\langle(\partial_\mu V)(s,x,\mathscr{L}_{X_t})(X_t),dK_t\rangle|_{s=t,x=X_t}\geq0.
\end{split}
\end{equation}
Then the solution $X_.$ of Equation \eqref{sec1-eq1.1} is exponentially stable in mean square, i.e.
$$
\mathbb{E}|X_t|^2\leq\frac{a_2}{a_1}e^{-\alpha t}\mathbb{E}|\xi|^2, \quad t\geq0.
$$
\end{theorem}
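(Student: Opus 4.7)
The plan is to run the standard Lyapunov argument for exponential stability, but driven by the generalized It\^o formula of Theorem~\ref{sec4-th4.1}. Concretely, I would set $W(t,x,\mu):=e^{\alpha t}V(t,x,\mu)$, apply Theorem~\ref{sec4-th4.1} to $W(t,X_t,\mathscr{L}_{X_t})$, take expectation to kill the martingale parts, use Assumption~\ref{sec5-ass5.2}(ii) to dominate the $ds$-drift, use (5.1) to discard the two $K$-terms, and finally invoke the quadratic pinching in Assumption~\ref{sec5-ass5.2}(iii) to translate the bound back to $\mathbb{E}|X_t|^2$.

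In more detail, expanding $d(e^{\alpha s}V(s,X_s,\mathscr{L}_{X_s}))$ by the product rule and applying \eqref{sec4-eq4.1} gives a finite-variation part
\begin{equation*}
e^{\alpha s}\bigl[\alpha V+\partial_sV+\mathbb{L}_{b,\sigma,f}V\bigr](s,X_s,\mathscr{L}_{X_s})\,ds-e^{\alpha s}\bigl[\langle\partial_xV,dK_s\rangle+\mathbb{E}\langle\partial_\mu V(r,y,\mathscr{L}_{X_s})(X_s),dK_s\rangle|_{r=s,y=X_s}\bigr],
\end{equation*}
plus a Brownian integral and a compensated Poisson integral. Since $V\in C_{b,+}^{1,2,2}$, all derivatives of $V$ are bounded, so these two stochastic integrals are genuine martingales and vanish after taking expectation. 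Integrating on $[0,t]$ and using Fubini, the first drift term becomes $\int_0^t e^{\alpha s}\int_{\mathbb{R}^d}[\alpha V+\partial_sV+\mathbb{L}_{b,\sigma,f}V](s,x,\mathscr{L}_{X_s})\,\mathscr{L}_{X_s}(dx)\,ds$, which is $\leq 0$ by Assumption~\ref{sec5-ass5.2}(ii); the bracketed $dK_s$-contribution, which enters with an overall minus sign, is expected to be non-negative thanks to \eqref{sec5-eq5.1}, so it can only decrease the right-hand side.

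Combining these two observations yields $\mathbb{E}[e^{\alpha t}V(t,X_t,\mathscr{L}_{X_t})]\leq\mathbb{E}V(0,\xi,\mathscr{L}_\xi)$. Applying the lower bound in Assumption~\ref{sec5-ass5.2}(iii) on the left and the upper bound on the right then gives $a_1 e^{\alpha t}\mathbb{E}|X_t|^2\leq a_2\mathbb{E}|\xi|^2$, which is precisely the exponential decay claimed in the theorem.

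The main technical obstacle is the careful handling of the two $dK$-integrals. Because $K$ is only of finite variation (not absolutely continuous in $t$), the expressions $\langle\partial_xV,dK_s\rangle$ and $\mathbb{E}\langle\partial_\mu V(r,y,\mathscr{L}_{X_s})(X_s),dK_s\rangle|_{r=s,y=X_s}$ have to be interpreted as Stieltjes integrals against the variation measure of $K$, and \eqref{sec5-eq5.1} should be read as a sign condition on the corresponding signed random measure. To legitimately pass to expectation, to commute the inner $\mathbb{E}$ coming from the $L$-derivative with the outer $\mathbb{E}$, and to apply Fubini against the $dK_s$-measure, one needs the boundedness supplied by $V\in C_{b,+}^{1,2,2}$ together with the a priori moment control on $X$ and on $|K|_0^T$ inherited from Theorem~\ref{sec2-th2.12}. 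Once these measurability and integrability points are settled the argument reduces to bookkeeping.
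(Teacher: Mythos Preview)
Your proposal is correct and follows essentially the same route as the paper: apply the generalized It\^o formula of Theorem~\ref{sec4-th4.1} to $e^{\alpha t}V(t,X_t,\mathscr{L}_{X_t})$, use \eqref{sec5-eq5.1} to drop the two $dK$-contributions (which enter with a minus sign), take expectations so the Brownian and compensated Poisson integrals vanish, bound the remaining drift by zero via Assumption~\ref{sec5-ass5.2}(ii), and finish with the quadratic pinching of Assumption~\ref{sec5-ass5.2}(iii). Your explicit Fubini step rewriting $\mathbb{E}[\cdots(X_s,\mathscr{L}_{X_s})]$ as $\int_{\mathbb{R}^d}\cdots\,\mathscr{L}_{X_s}(dx)$ and your discussion of the Stieltjes interpretation of the $dK$-terms are points the paper leaves implicit, but the overall argument is the same.
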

\begin{proof}
Applying the It\^{o} formula \eqref{sec4-eq4.1}, we get
\begin{equation}\label{sec5-eq5.2}
\begin{split}
&\qquad e^{\alpha t}V(t,X_t,\mathscr{L}_{X_t})-V(0,\xi,\mathscr{L}_{\xi})\\
&=\int_0^t\alpha e^{\alpha s}V(s,X_s,\mathscr{L}_{X_s})ds
+\int_0^te^{\alpha s}\partial_sV(s,x,\mu)|_{x=X_s,\mu=\mathscr{L}_{X_s}}ds\\
&\quad-\int_0^te^{\alpha s}\langle\partial_xV(s,X_s,\mathscr{L}_{X_s}),dK_s\rangle
+\int_0^te^{\alpha s}\langle\partial_xV(s,X_s,\mathscr{L}_{X_s}),b(s,X_s,\mathscr{L}_{X_s})\rangle ds\\
&\quad+\int_0^te^{\alpha s}\langle\partial_xV(s,X_s,\mathscr{L}_{X_s}),\sigma(s,X_s,\mathscr{L}_{X_s})dB_s\rangle\\
&\quad+\int_0^t\int_{\mathbb{U}_0}e^{\alpha s}
[V(s,X_{s-}+f(s,X_{s-},\mathscr{L}_{X_s},u),\mathscr{L}_{X_s})-V(s,X_{s-},\mathscr{L}_{X_s})]\widetilde{N}(ds,du)\\
&\quad+\frac12\int_0^te^{\alpha s}
tr\big((\sigma\sigma^*)(s,X_s,\mathscr{L}_{X_s})\partial_x^2V(s,X_s,\mathscr{L}_{X_s})\big)ds\\
&\quad-\int_0^te^{\alpha s}\mathbb{E}\langle(\partial_{\mu }V)
(r,x,\mathscr{L}_{X_s})(X_s),dK_s\rangle|_{r=s,x=X_s}\\
&\quad+\int_0^t\int_{\mathbb{R}^d}e^{\alpha s}
\langle b(s,y,\mathscr{L}_{X_s}),\partial_{\mu}V(s,X_s,\mathscr{L}_{X_s})(y)\rangle\mathscr{L}_{X_s}(dy)ds\\
&\quad+\frac12\int_0^t\int_{\mathbb{R}^d}e^{\alpha s}
tr\big((\sigma\sigma^*)(s,y,\mathscr{L}_{X_s})\partial_y\partial_{\mu}V(s,X_s,\mathscr{L}_{X_s})(y)\big)\mathscr{L}_{X_s}(dy)ds\\
&\quad+\int_0^t\int_{\mathbb{U}_0}\int_0^1\int_{\mathbb{R}^d}
e^{\alpha s}\langle\partial_{\mu}V(s,X_s,\mathscr{L}_{X_s})(y+\eta f(s,y,\mathscr{L}_{X_s},u))\\
&\qquad\qquad\qquad\qquad-\partial_{\mu}V(s,X_s,\mathscr{L}_{X_s})(y),f(s,y,\mathscr{L}_{X_s},u)\rangle\mathscr{L}_{X_s}(dy)d\eta v(du)ds\\
&\quad+\int_0^t\int_{\mathbb{U}_0}e^{\alpha s}[V(s,X_s+f(s,X_s,\mathscr{L}_{X_s},u),\mathscr{L}_{X_s})-V(s,X_s,\mathscr{L}_{X_s})\\
&\qquad\qquad\qquad\qquad-\langle f(s,X_s,\mathscr{L}_{X_s},u),\partial_xV(s,X_s,\mathscr{L}_{X_s})\rangle]v(du)ds.\\
\end{split}
\end{equation}
\noindent{By \eqref{sec5-eq5.1}, it follows that}

\begin{equation}\label{sec5-eq5.3}
\begin{split}
&\quad e^{\alpha t}V(t,X_t,\mathscr{L}_{X_t})-V(0,\xi,\mathscr{L}_{\xi})\\
&\leq\int_0^te^{\alpha s}\Big\{\alpha V(s,X_s,\mathscr{L}_{X_s})+\partial_sV(s,X_s,\mathscr{L}_{X_s})
+\langle\partial_xV(s,X_s,\mathscr{L}_{X_s}),b(s,X_s,\mathscr{L}_{X_s})\rangle\\
&\quad+\frac12tr\big((\sigma\sigma^*)(s,X_s,\mathscr{L}_{X_s})\partial_x^2V(s,X_s,\mathscr{L}_{X_s})\big)
+\int_{\mathbb{R}^d}
\langle b(s,y,\mathscr{L}_{X_s}),\partial_{\mu}V(s,X_s,\mathscr{L}_{X_s})(y)\rangle\mathscr{L}_{X_s}(dy)\\
&\quad+\frac12\int_{\mathbb{R}^d}
tr((\sigma\sigma^*)(s,y,\mathscr{L}_{X_s})\partial_y\partial_{\mu}V(s,X_s,\mathscr{L}_{X_s})(y))\mathscr{L}_{X_s}(dy)\\
&\quad+\int_{\mathbb{U}_0}\int_0^1\int_{\mathbb{R}^d}
\langle\partial_{\mu}V(s,X_s,\mathscr{L}_{X_s})(y+\eta f(s,y,\mathscr{L}_{X_s},u))\\
&\qquad\qquad-\partial_{\mu}V(s,X_s,\mathscr{L}_{X_s})(y),f(s,y,\mathscr{L}_{X_s},u)\rangle\mathscr{L}_{X_s}(dy)d\eta v(du)\\
&\quad+\int_{\mathbb{U}_0}[V(s,X_s+f(s,X_s,\mathscr{L}_{X_s},u),\mathscr{L}_{X_s})-V(s,X_s,\mathscr{L}_{X_s})\\
&\qquad\qquad-\langle f(s,X_s,\mathscr{L}_{X_s},u),\partial_xV(s,X_s,\mathscr{L}_{X_s})\rangle]v(du)\Big\}ds\\
&\quad+\int_0^te^{\alpha s}\langle\partial_xV(s,X_s,\mathscr{L}_{X_s}),\sigma(s,X_s,\mathscr{L}_{X_s})dB_s\rangle\\
&\quad+\int_0^t\int_{\mathbb{U}_0}e^{\alpha s}
[V(s,X_{s-}+f(s,X_{s-},\mathscr{L}_{X_s},u),\mathscr{L}_{X_s})-V(s,X_{s-},\mathscr{L}_{X_s})]\widetilde{N}(ds,du)\\
&=\int_0^te^{\alpha s}(\alpha V(s,X_s,\mathscr{L}_{X_s})+\partial_sV(s,X_s,\mathscr{L}_{X_s})
+\mathbb{L}_{b,\sigma,f}V(s,X_s,\mathscr{L}_{X_s}))ds\\
&\quad+\int_0^te^{\alpha s}\langle\partial_xV(s,X_s,\mathscr{L}_{X_s}),\sigma(s,X_s,\mathscr{L}_{X_s})dB_s\rangle\\
&\quad+\int_0^t\int_{\mathbb{U}_0}e^{\alpha s}
[V(s,X_{s-}+f(s,X_{s-},\mathscr{L}_{X_s},u),\mathscr{L}_{X_s})-V(s,X_{s-},\mathscr{L}_{X_s})]\widetilde{N}(ds,du).\\
\end{split}
\end{equation}

\noindent{Taking the expectation on both sides of the above equality, by Assumption \ref{sec5-ass5.2} we obtain}
\begin{align*}
&\quad e^{\alpha t}\mathbb{E}V(t,X_t,\mathscr{L}_{X_t})-\mathbb{E}V(0,\xi,\mathscr{L}_{\xi})\\
&\leq\mathbb{E}\int_0^te^{\alpha s}(\alpha V(s,X_s,\mathscr{L}_{X_s})+\partial_sV(s,X_s,\mathscr{L}_{X_s})
+\mathbb{L}_{b,\sigma,f}V(s,X_s,\mathscr{L}_{X_s}))ds\leq0.
\end{align*}
Thus,
$$
\mathbb{E}V(t,X_t,\mathscr{L}_{X_t})\leq e^{-\alpha t}\mathbb{E}V(0,\xi,\mathscr{L}_{\xi}).
$$
At last, by Assumption \ref{sec5-ass5.2}, we have
$$
a_1\mathbb{E}|X_t|^2\leq\mathbb{E}V(t,X_t,\mathscr{L}_{X_t})
\leq e^{-\alpha t}\mathbb{E}V(0,\xi,\mathscr{L}_{\xi})\leq a_2e^{-\alpha t}\mathbb{E}|\xi|^2,
$$
Hence,
$$
\mathbb{E}|X_t|^2\leq\frac{a_2}{a_1}e^{-\alpha t}\mathbb{E}|\xi|^2.
$$
This completes the proof.
\end{proof}

\subsection{The exponentially 2-ultimate boundedness }\label{sec5.2}
In the subsection, we study the exponentially 2-ultimate boundedness for the strong solution to Equation \eqref{sec1-eq1.1}. We firstly present the following concept of the exponentially 2-ultimate boundedness.
\begin{definition}\label{sec5-def5.4}
If there exist positive constants $M,\lambda,W$ such that
$$
\mathbb{E}|X_t|^2\leq Me^{-\lambda t}\mathbb{E}|\xi|^2+W,\quad t\geq0,
$$
for $X_0=\xi$, then the solution $X_.$  of Equation \eqref{sec1-eq1.1} is called exponentially 2-ultimately bounded.
\end{definition}

\begin{assumption}\label{sec5-ass5.5}
There exists a function $V:[0,T]\times\mathbb{R}^d\times\mathcal{M}_2(\mathbb{R}^d)\rightarrow\mathbb{R}$ satisfying
\begin{itemize}
\item [(i)]
$V\in C_{b,+}^{1,2,2}([0,T]\times\mathbb{R}^d\times\mathcal{M}_2(\mathbb{R}^d));$
\item [(ii)]
$$\int_{\mathbb{R}^d}(\mathbb{L}_{b,\sigma,f}V(s,x,\mu)+\alpha V(s,x,\mu)+\partial_sV(s,x,\mu))\mu(dx)
\leq N_1,$$
where $\alpha>0$, $N_1\geq0$ are constants;
\item [(iii)]
$$a_1\int_{\mathbb{R}^d}|x|^2\mu(dx)-N_2\leq\int_{\mathbb{R}^d}V(s,x,\mu)\mu(dx)\leq a_2\int_{\mathbb{R}^d}|x|^2\mu(dx)+N_3,$$
where $N_2,N_3 \geq0$ are constants.
\end{itemize}
\end{assumption}

\begin{theorem}\label{sec5-th5.6}
 Suppose that Assumptions \ref{sec2-ass2.9}, \ref{sec2-ass2.11} and \ref{sec5-ass5.5} hold, $\xi$ is a $\mathscr{F}_0$-measurable random variable with $\mathbb{E}|\xi|^2<\infty$.  If the strong solution $(X_.,K_.)$ and the Lyapunov function $V$ satisfy for any $t\geq0$
\begin{equation}\label{sec5-eq5.4}
\begin{split}
\langle\partial_xV(t,X_t,\mathscr{L}_{X_t}),dK_t\rangle+\mathbb{E}\langle(\partial_\mu V)(s,x,\mathscr{L}_{X_t})(X_t),dK_t\rangle|_{s=t,x=X_t}\geq0,
\end{split}
\end{equation}
then the solution $X_.$  of Equation \eqref{sec1-eq1.1} is exponentially 2-ultimately bounded, i.e.
$$
\mathbb{E}|X_t|^2\leq\frac{a_2}{a_1}e^{-\alpha t}\mathbb{E}|\xi|^2+\frac{\alpha(N_2+N_3)+N_1}{\alpha a_1},\quad t\geq0.
$$
Similar to the proof in Theorem \ref{sec5-th5.3}, we can easily obtain the above result. Thus we omit the proof.
\end{theorem}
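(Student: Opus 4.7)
The plan is to mirror the proof of Theorem \ref{sec5-th5.3} almost verbatim, with the only change coming from the presence of the constants $N_1,N_2,N_3$ in Assumption \ref{sec5-ass5.5}. First, I would apply the generalized It\^{o} formula from Theorem \ref{sec4-th4.1} to the process $e^{\alpha t}V(t,X_t,\mathscr{L}_{X_t})$. This produces an expansion identical to \eqref{sec5-eq5.2}, containing (a) the drift-type integrand $\alpha V+\partial_sV+\mathbb{L}_{b,\sigma,f}V$ evaluated along the path, (b) two terms involving $dK_s$, namely $-\langle\partial_xV,dK_s\rangle$ and $-\mathbb{E}\langle\partial_\mu V(\cdot)(X_s),dK_s\rangle$, and (c) a Brownian stochastic integral and a compensated Poisson integral.

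Next, I would use the sign hypothesis \eqref{sec5-eq5.4} to discard both $K_s$-contributions: since they appear with a negative sign in front and their sum is nonnegative, the right-hand side only grows when we drop them. Taking expectations kills the two martingale integrals; here the $C^{1,2,2}_{b,+}$ regularity of $V$ and Assumption \ref{sec2-ass2.9} guarantee the necessary square-integrability (this is essentially identical to the justification already done in Theorem \ref{sec5-th5.3}, so there is no new obstacle). Invoking Assumption \ref{sec5-ass5.5}(ii) then yields
\begin{equation*}
e^{\alpha t}\mathbb{E}V(t,X_t,\mathscr{L}_{X_t})-\mathbb{E}V(0,\xi,\mathscr{L}_{\xi})
\leq\int_0^t e^{\alpha s}N_1\,ds=\frac{N_1}{\alpha}(e^{\alpha t}-1),
\end{equation*}
which rearranges to $\mathbb{E}V(t,X_t,\mathscr{L}_{X_t})\leq e^{-\alpha t}\mathbb{E}V(0,\xi,\mathscr{L}_\xi)+\frac{N_1}{\alpha}(1-e^{-\alpha t})$.

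Finally, I would apply the two-sided bound in Assumption \ref{sec5-ass5.5}(iii): the upper bound to $\mathbb{E}V(0,\xi,\mathscr{L}_\xi)\leq a_2\mathbb{E}|\xi|^2+N_3$ and the lower bound to $a_1\mathbb{E}|X_t|^2-N_2\leq\mathbb{E}V(t,X_t,\mathscr{L}_{X_t})$. Combining these and using the crude estimates $e^{-\alpha t}\leq 1$ and $1-e^{-\alpha t}\leq 1$ on the nonnegative constants gives
\begin{equation*}
a_1\mathbb{E}|X_t|^2\leq a_2 e^{-\alpha t}\mathbb{E}|\xi|^2+N_2+N_3+\frac{N_1}{\alpha},
\end{equation*}
from which dividing through by $a_1$ yields exactly the claimed bound $\mathbb{E}|X_t|^2\leq\frac{a_2}{a_1}e^{-\alpha t}\mathbb{E}|\xi|^2+\frac{\alpha(N_2+N_3)+N_1}{\alpha a_1}$.

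There is really no hard step here: the argument is structurally a carbon copy of Theorem \ref{sec5-th5.3}, and the only subtle point, verifying that the stochastic integrals are genuine martingales so that expectations vanish, is already handled by the boundedness of the derivatives of $V$ built into $C^{1,2,2}_{b,+}$ together with the linear-growth estimates from Assumption \ref{sec2-ass2.9}. For this reason the authors' choice to omit the proof is justified; a full write-up would amount to copying \eqref{sec5-eq5.2}--\eqref{sec5-eq5.3} with the right-hand side bounded by $\int_0^t e^{\alpha s}N_1\,ds$ in place of $0$.
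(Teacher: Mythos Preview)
Your proposal is correct and is exactly the approach the paper intends: the authors omit the proof precisely because it is a line-by-line repetition of Theorem~\ref{sec5-th5.3} with Assumption~\ref{sec5-ass5.5} replacing Assumption~\ref{sec5-ass5.2}, and your computation recovers the stated constant $\frac{\alpha(N_2+N_3)+N_1}{\alpha a_1}$ on the nose.
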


\subsection{The almost surely asymptotic stability for the strong solution}\label{sec5.3}
In the subsection, we require that $X_0=\xi$ is non-random and consider the almost
 surely asymptotic stability of the strong solution for Equation \eqref{sec1-eq1.1}. To begin with, we recall the concept of the almost surely asymptotic stability.
\begin{definition}\label{sec5-def5.7}
If for $X_0=\xi$, it holds that
$$
\mathbb{P}\Big\{\lim_{t\rightarrow\infty}|X_t|=0\Big\}=1,
$$
we say that the solution $X_.$  of Equation \eqref{sec1-eq1.1}  is almost surely asymptotically stable.
\end{definition}

Next we introduce a function class. Let $\Upsilon$ denote the family of functions $\vartheta:\mathbb{R}_+\rightarrow\mathbb{R}_+$, which are continuous, strictly increasing, and $\vartheta(0)=0$. And $\Upsilon_\infty$ means the family of functions $\vartheta\in\Upsilon$ with $\vartheta(x)\rightarrow\infty$ as $x\rightarrow\infty$. Then we present some assumption.
\begin{assumption}\label{sec5-ass5.8}
There exists a function $V:[0,T]\times\mathbb{R}^d\times\mathcal{M}_2(\mathbb{R}^d)\rightarrow\mathbb{R}$ satisfying
\begin{itemize}
\item [(i)]
$V\in C_{b,+}^{1,2,2;1}([0,T]\times\mathbb{R}^d\times\mathcal{M}_2(\mathbb{R}^d));$
\item [(ii)]
$\mathbb{L}_{b,\sigma,f}V(s,x,\mu)+\alpha V(s,x,\mu)+\partial_sV(s,x,\mu)\leq 0,$
where $\alpha>0$ is a constant;
\item [(iii)]
$\gamma_1(|x|)\leq V(s,x,\mu)\leq\gamma_2(|x|),$ where $\gamma_1,\gamma_2\in\Upsilon_\infty$.
\end{itemize}
\end{assumption}

\begin{theorem}\label{sec5-th5.9}
 Suppose that Assumptions \ref{sec2-ass2.9}, \ref{sec2-ass2.11} and \ref{sec5-ass5.8} hold. If the strong solution $(X_.,K_.)$ and the Lyapunov function $V$ satisfy for any $t\geq0$
\begin{equation}\label{sec5-eq5.5}
\begin{split}
\langle\partial_xV(t,X_t,\mathscr{L}_{X_t}),dK_t\rangle+\mathbb{E}\langle(\partial_\mu V)(s,x,\mathscr{L}_{X_t})(X_t),dK_t\rangle|_{s=t,x=X_t}\geq0,
\end{split}
\end{equation}
then the solution $X_.$  of Equation \eqref{sec1-eq1.1}  is almost surely asymptotically stable, i.e.
$$
\mathbb{P}\Big\{\lim_{t\rightarrow\infty}|X_t|=0\Big\}=1.
$$
\end{theorem}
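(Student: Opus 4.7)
The plan is to apply the generalized It\^{o} formula \eqref{sec4-eq4.1} to $e^{\alpha t}V(t,X_t,\mathscr{L}_{X_t})$, mimicking the decomposition carried out in the proof of Theorem~\ref{sec5-th5.3}, but exploiting the crucial difference that Assumption~\ref{sec5-ass5.8}(ii) holds \emph{pointwise} rather than only after integration against $\mu$. This will allow us to deduce that $e^{\alpha t}V(t,X_t,\mathscr{L}_{X_t})$ is a nonnegative supermartingale, from which almost sure convergence of $|X_t|$ to $0$ will follow from the nonnegative supermartingale convergence theorem combined with the exploding factor $e^{\alpha t}$ and the lower bound $\gamma_1(|X_t|)\le V(t,X_t,\mathscr{L}_{X_t})$.

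First, applying It\^{o}'s formula exactly as in \eqref{sec5-eq5.2} and then invoking condition \eqref{sec5-eq5.5} to absorb the two reflection terms (which collectively contribute a nonnegative quantity that is \emph{subtracted}), one obtains
\begin{equation*}
e^{\alpha t}V(t,X_t,\mathscr{L}_{X_t})\le V(0,\xi,\delta_\xi)+\int_0^t e^{\alpha s}\bigl(\alpha V+\partial_s V+\mathbb{L}_{b,\sigma,f}V\bigr)(s,X_s,\mathscr{L}_{X_s})\,ds+\mathcal{M}_t,
\end{equation*}
where $\mathcal{M}_t$ collects the Brownian stochastic integral and the compensated Poisson stochastic integral appearing in \eqref{sec5-eq5.2}. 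By Assumption~\ref{sec5-ass5.8}(ii) the $ds$-integrand is pointwise nonpositive, so $e^{\alpha t}V(t,X_t,\mathscr{L}_{X_t})\le V(0,\xi,\delta_\xi)+\mathcal{M}_t$ almost surely. Since $V\ge 0$, this expresses $e^{\alpha t}V(t,X_t,\mathscr{L}_{X_t})$ as a nonnegative local supermartingale, and by Fatou's lemma it is in fact a genuine supermartingale.

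The nonnegative supermartingale convergence theorem then ensures that $e^{\alpha t}V(t,X_t,\mathscr{L}_{X_t})$ converges almost surely to a finite random limit $Y_\infty$ as $t\to\infty$. Dividing by the exploding factor $e^{\alpha t}$ gives $V(t,X_t,\mathscr{L}_{X_t})\to 0$ almost surely, and the sandwich $\gamma_1(|X_t|)\le V(t,X_t,\mathscr{L}_{X_t})$ together with the continuity and strict monotonicity of $\gamma_1\in\Upsilon_\infty$ (so that $\gamma_1^{-1}$ is continuous at $0$ with $\gamma_1^{-1}(0)=0$) yields $|X_t|\to 0$ almost surely.

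The main technical obstacle I expect is justifying rigorously that $\mathcal{M}_t$ is indeed a local martingale so that the supermartingale argument is watertight. For the Brownian piece the integrand $e^{\alpha s}\langle\partial_xV(s,X_s,\mathscr{L}_{X_s}),\sigma(s,X_s,\mathscr{L}_{X_s})\rangle$ is controlled via the boundedness of $\partial_xV$ and the linear growth of $\sigma$ from Assumption~\ref{sec2-ass2.9}(ii); for the compensated Poisson piece, the jump increment $V(s,X_{s-}+f,\mathscr{L}_{X_s})-V(s,X_{s-},\mathscr{L}_{X_s})$ is dominated by $\|\partial_xV\|_\infty|f(s,X_{s-},\mathscr{L}_{X_s},u)|$, whose square is $v(du)\,ds$-integrable thanks to Assumption~\ref{sec2-ass2.9}(ii) and $\int_{\mathbb{U}_0}\|u\|_{\mathbb{U}}^2 v(du)<\infty$. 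A standard localization by $\tau_n:=\inf\{t:|X_t|\ge n\}$ makes each piece a true martingale on $[0,\tau_n\wedge t]$, and Fatou's lemma then transfers the supermartingale property to all of $[0,\infty)$, completing the argument.
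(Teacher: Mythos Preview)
Your argument is correct, and it is genuinely different from the route taken in the paper. You apply the generalized It\^{o} formula directly to $e^{\alpha t}V(t,X_t,\mathscr{L}_{X_t})$, use the pointwise Lyapunov condition in Assumption~\ref{sec5-ass5.8}(ii) together with \eqref{sec5-eq5.5} to see that the finite-variation part is nonincreasing, and conclude via the nonnegative (local) supermartingale convergence theorem and the lower bound $\gamma_1(|X_t|)\le V(t,X_t,\mathscr{L}_{X_t})$. This is self-contained and transparent: the Lyapunov structure is used exactly once, and the passage from ``$e^{\alpha t}V$ converges a.s.'' to ``$|X_t|\to 0$ a.s.'' is immediate.

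By contrast, the paper does \emph{not} work with $e^{\alpha t}V$ at all in its displayed proof. It first localizes by $\tau_n=\inf\{t:|X_t|>n\}$, applies the classical It\^{o} formula to $|X_{t\wedge\tau_n}-\xi|^2$, and after BDG and Lemma~\ref{sec2-lem2.1} obtains the crude moment bound $\mathbb{E}\sup_{0\le s\le t}|X_{s\wedge\tau_n}-\xi|^2\le Ct$ and the resulting Chebyshev estimate \eqref{sec5-eq5.11}. The actual stability conclusion is then outsourced to \cite[Theorem 5.2]{X. Ding}. Thus the paper's proof is essentially a preparation (a non-explosion/tightness-type estimate) for invoking an external result, whereas your supermartingale argument closes the loop internally. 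The trade-off is that the paper's estimate \eqref{sec5-eq5.11} does not use $V$ at all and could in principle feed into several stability criteria, while your approach makes the role of the pointwise condition \ref{sec5-ass5.8}(ii) (as opposed to the integrated condition in Assumption~\ref{sec5-ass5.2}(ii)) completely explicit: it is precisely what upgrades the conclusion from an $L^2$ statement to an almost sure one.
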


\begin{proof}
 Above all, since under Assumptions \ref{sec2-ass2.9} and \ref{sec2-ass2.11}, Equation \eqref{sec1-eq1.1} has a unique strong solution $(X_.,K_.)$ with the initial value $(\xi,0)$. Set $\tau_n:=inf\{t\geq0,|X_t|>n\}$. By It\^{o}'s formula, we have
\begin{align*}
|X_{t\wedge\tau_n}-\xi|^2&=-2\int_0^{t\wedge\tau_n}\langle X_s-\xi,dK_s\rangle
+2\int_0^{t\wedge\tau_n}\langle X_s-\xi,b(s,X_s,\mathscr{L}_{X_s})\rangle ds\\
&\quad+2\int_0^{t\wedge\tau_n}\langle X_s-\xi,\sigma(s,X_s,\mathscr{L}_{X_s})dB_s\rangle
+\int_0^{t\wedge\tau_n}\|\sigma(s,X_s,\mathscr{L}_{X_s})\|^2ds\\
&\quad+\int_{[0,t\wedge\tau_n]\times U_0}|f(s,X_{s},\mathscr{L}_{X_s},u)|^2v(du)ds\\
&\quad+\int_{[0,t\wedge\tau_n]\times U_0}
(|X_{s-}-\xi+f(s,X_{s-},\mathscr{L}_{X_s},u)|^2-|X_{s-}-\xi|^2)\widetilde{N}(ds,du).\\
\end{align*}

\noindent{Therefore,}
\begin{align*}
&\quad\mathbb{E}(\sup_{0\leq s\leq t}|X_{s\wedge\tau_n}-\xi|^2)\\
&\leq\mathbb{E}\int_0^{t\wedge\tau_n}|X_s-\xi|^2ds
+\mathbb{E}\int_0^{t\wedge\tau_n}(|b(s,X_s,\mathscr{L}_{X_s})|^2+\|\sigma(s,X_s,\mathscr{L}_{X_s})\|^2)ds\\
&\quad+\mathbb{E}\int_{[0,t\wedge\tau_n]\times U_0}|f(s,X_{s},\mathscr{L}_{X_s},u)|^2v(du)ds
+2\mathbb{E}\Big(\sup_{0\leq s\leq t}\int_0^{s\wedge\tau_n}\langle X_r-\xi,\sigma(r,X_r,\mathscr{L}_{X_r})dB_r\rangle\Big)\\
\end{align*}
\begin{equation}\label{sec5-eq5.6}
\begin{split}
&\quad+2\mathbb{E}\Big(\sup_{0\leq s\leq t}\int_{[0,s\wedge\tau_n]\times U_0}\langle X_{r-}-\xi,f(r,X_{r-},\mathscr{L}_{X_r},u)\rangle\widetilde{N}(dr,du)\Big)\\
&\quad+\mathbb{E}\Big(\sup_{0\leq s\leq t}\int_{[0,s\wedge\tau_n]\times U_0}
|f(r,X_{r-},\mathscr{L}_{X_r},u)|^2\widetilde{N}(dr,du)\Big).\\
\end{split}
\end{equation}

\noindent{Burkholder-Davis-Gundy inequality implies}
\begin{equation}\label{sec5-eq5.7}
\begin{split}
&\quad2\mathbb{E}\Big(\sup_{0\leq s\leq t}\int_0^{s\wedge\tau_n}\langle X_r-\xi,\sigma(r,X_r,\mathscr{L}_{X_r})dB_r\rangle\Big)\\
&\leq C\mathbb{E}\Big(\int_0^{t\wedge\tau_n}|X_s-\xi|^2\|\sigma(s,X_s,\mathscr{L}_{X_s})\|^2ds\Big)^{\frac12}\\
&\leq \frac13\mathbb{E}(\sup_{0\leq s\leq t}|X_{s\wedge\tau_n}-\xi|^2)
+C\mathbb{E}\int_0^{t\wedge\tau_n}\|\sigma(s,X_s,\mathscr{L}_{X_s})\|^2ds.
\end{split}
\end{equation}

\noindent{With the help of Lemma \ref{sec2-lem2.1}, we deduce}
\begin{equation}\label{sec5-eq5.8}
\begin{split}
&\quad2\mathbb{E}\Big(\sup_{0\leq s\leq t}\int_{[0,s\wedge\tau_n]\times U_0}\langle X_{r-}-\xi,f(r,X_{r-},\mathscr{L}_{X_r},u)\rangle\widetilde{N}(dr,du)\Big)\\
&\leq C\mathbb{E}\Big(\int_{[0,t\wedge\tau_n]\times U_0}|\langle X_{s-}-\xi,f(s,X_{s-},\mathscr{L}_{X_s},u)\rangle|^2
 N(ds,du)\Big)^{\frac12}\\
&\leq C\mathbb{E}\Big(\sup_{0\leq s\leq t}|X_{s\wedge\tau_n}-\xi|^2\int_{[0,t\wedge\tau_n]\times U_0} |f(s,X_{s-},\mathscr{L}_{X_s},u)|^2 N(ds,du)\Big)^{\frac12}\\
&\leq\frac13\mathbb{E}(\sup_{0\leq s\leq t}|X_{s\wedge\tau_n}-\xi|^2)+C\mathbb{E}\int_{[0,t\wedge\tau_n]\times U_0}|f(s,X_{s},\mathscr{L}_{X_s},u)|^2v(du)ds,
\end{split}
\end{equation}
\noindent{and}
\begin{equation}\label{sec5-eq5.9}
\begin{split}
&\quad\mathbb{E}\Big(\sup_{0\leq s\leq t}\int_{[0,s\wedge\tau_n]\times U_0}
|f(r,X_{r-},\mathscr{L}_{X_r},u)|^2\widetilde{N}(dr,du)\Big)\\
&\leq C\mathbb{E}\Big\{\sum_{s\in D_{p(s)},s\leq t\wedge\tau_n}
|f(s,X_{s-},\mathscr{L}_{X_s},p(s))|^4\Big\}^{\frac12}\\
&\leq C\mathbb{E}\sum_{s\in D_{p(s)},s\leq t\wedge\tau_n}
|f(s,X_{s-},\mathscr{L}_{X_s},p(s))|^2\\
&= C\mathbb{E}\int_{[0,t\wedge\tau_n]\times U_0}|f(s,X_{s},\mathscr{L}_{X_s},u)|^2v(du)ds.
\end{split}
\end{equation}

\noindent{Gathering all the above estimates \eqref{sec5-eq5.7}-\eqref{sec5-eq5.9} into \eqref{sec5-eq5.6}, we obtain}
\begin{equation}\label{sec5-eq5.10}
\begin{split}
&\quad\mathbb{E}(\sup_{0\leq s\leq t}|X_{s\wedge\tau_n}-\xi|^2)\\
&\leq3\mathbb{E}\int_0^{t\wedge\tau_n}|X_s-\xi|^2ds
+C\mathbb{E}\int_0^{t\wedge\tau_n}(|b(s,X_s,\mathscr{L}_{X_s})|^2+\|\sigma(s,X_s,\mathscr{L}_{X_s})\|^2)ds\\
&\quad+C\mathbb{E}\int_{[0,t\wedge\tau_n]\times U_0}|f(s,X_{s},\mathscr{L}_{X_s},u)|^2v(du)ds.
\end{split}
\end{equation}

\noindent{By Assumption \ref{sec2-ass2.9}, we have}
\begin{align*}
&\quad\mathbb{E}(\sup_{0\leq s\leq t}|X_{s\wedge\tau_n}-\xi|^2)\\
&\leq3\mathbb{E}\int_0^{t\wedge\tau_n}|X_s-\xi|^2ds
+C\mathbb{E}\int_0^{t\wedge\tau_n}L_2(s)(1+|X_s|^2+\|\mathscr{L}_{X_s}\|_2^2)ds\\
&\leq6\int_0^{t\wedge\tau_n}(\mathbb{E}|X_s|^2+\mathbb{E}|\xi|^2)ds
+CL_2(T)\int_0^{t\wedge\tau_n}(1+2\mathbb{E}|X_s|^2)ds\\
&\leq6\int_0^{t\wedge\tau_n}(n^2+\mathbb{E}|\xi|^2)ds
+CL_2(T)\int_0^{t\wedge\tau_n}(1+2n^2)ds\\
&\leq C(t\wedge\tau_n)\leq Ct.
\end{align*}
Using the Chebyshev inequality, we have for any $\lambda>0$,
\begin{equation}\label{sec5-eq5.11}
\begin{split}
\mathbb{P}\Big\{\sup_{0\leq s\leq t}|X_{s\wedge\tau_n}-\xi|>\lambda\Big\}\leq\frac{Ct}{\lambda^2}.
\end{split}
\end{equation}
Finally, we follow up the line in (\cite{X. Ding}, Theorem 5.2) and apply \eqref{sec5-eq5.11} to derive
$$
\mathbb{P}\Big\{\lim_{t\rightarrow\infty}|X_t|=0\Big\}=1.
$$
This completes the proof.
\end{proof}

\bigskip

\bigskip
$\begin{array}{cc}
\begin{minipage}[t]{1\textwidth}
{\bf Guangjun Shen}\\
Department of Mathematics, Anhui Normal University, Wuhu 241002, China\\
\texttt{gjshen@163.com}
\end{minipage}
\hfill
\end{array}$

$\begin{array}{cc}
\begin{minipage}[t]{1\textwidth}
{\bf Jie Xiang}\\
Department of Mathematics, Anhui Normal University, Wuhu 241002, China\\
\texttt{jiexiangahnu@163.com}
\end{minipage}
\hfill
\end{array}$

$\begin{array}{cc}
\begin{minipage}[t]{1\textwidth}
{\bf Jiang-Lun Wu}\\
Department of Mathematics, Computational Foundry,
 Swansea University \\ Swansea, SA1 8EN, UK\\
\texttt{j.l.wu@swansea.ac.uk}
\end{minipage}
\hfill
\end{array}$

\end{document}